\newtheorem{thm}{Theorem}[section]
\newtheorem{prop}[thm]{Proposition}
\newtheorem{remark}[thm]{Remark}
\newtheorem{cor}[thm]{Corollary}
\newtheorem{lemma}[thm]{Lemma}
\newtheorem*{thm:rescaledhullslines}{Proposition \ref{thm:rescaledhullslines}}
\newtheorem*{thm:timechangesqrtnotsimple}{Theorem \ref{thm:timechangesqrtnotsimple}}
\newtheorem*{thm:timechangebmnotsimple}{Theorem \ref{thm:timechangebmnotsimple}}
\newtheorem*{thm:tangentialdeparture}{Proposition \ref{tangentialdeparture}}
\newcommand{\be}{\begin{equation}}
\newcommand{\ee}{\end{equation}}
\newcommand{\bes}{\begin{equation*}}
\newcommand{\ees}{\end{equation*}}
\renewcommand{\l}{\lambda}
\newcommand{\g}{\gamma}
\newcommand{\R}{\mathbb R}
\newcommand{\e}{\epsilon}
\newcommand{\hcap}{\operatorname{hcap}}
\def\E{{\mathbb E}}
\def\P{{\mathbb P}}
\def\1{{\bf 1}}
\begin{document}

\title{{\Large \bf Effect of random time changes on Loewner hulls\thanks{\scriptsize To appear in Revista Matem\'atica Iberoamericana}}}
\author{{\bf Kei Kobayashi\thanks{\scriptsize Department of Mathematics, Fordham University, New York, NY 10023, USA. Email: kkobayashi5@fordham.edu},
		Joan Lind\thanks{\scriptsize Department of Mathematics, The University of Tennessee, Knoxville, TN 37996, USA. Email: jlind@utk.edu}, and 
		Andrew Starnes\thanks{\scriptsize Department of Mathematics, University of Hartford, West Hartford, CT 06117, USA. Email: starnes@hartford.edu}}}

\date{October 12, 2019}

\maketitle

%
%

\begin{abstract}
Loewner hulls are determined by their real-valued driving functions.  
We study the geometric effect on the Loewner hulls when the driving function is composed with a random time change, such as the inverse of an $\alpha$-stable subordinator.  
In contrast to SLE, we show that for a large class of random time changes, the time-changed Brownian motion process does not generate a simple curve.  
Further we develop criteria which can be applied in many situations to determine whether the Loewner hull generated by a time-changed driving function is simple or non-simple.
To aid our analysis of an example with a  time-changed deterministic driving function,
we prove a deterministic result that a driving function that moves faster than $at^r$ for $r \in (0,1/2)$ generates a hull that leaves the real line tangentially.\par\vspace{3mm}

\noindent\textit{key words:} Loewner evolution, random time change, inverse subordinator, time-changed Brownian motion
\end{abstract}

%
%


\section{Introduction}

Schramm--Loewner Evolution, denoted SLE$_\kappa$, is a family of random curves in the upper halfplane $\mathbb{H}$ that is generated by the random function $\lambda(t)=\sqrt{\kappa}B_t$, where $\kappa\geq0$ and $(B_t)_{t\geq 0}$ is a one-dimensional standard Brownian motion. Based on $\kappa$, SLE$_\kappa$ exhibits phase transitions. Namely, for $0\leq\kappa\leq4$ the curves are simple, for $4<\kappa<8$ the curves are not simple, and for $8\leq\kappa$ the curves are spacefilling; 
where all of these hold almost surely \cite{basicpropertiesofsle}.
In the deterministic setting, it is also known that if we let $\lambda(t)$ be a H\"older-$\frac{1}{2}$ continuous function with norm $\|\lambda\|_{1/2}$, then there are similar phase transitions: for $0\leq\|\lambda\|_{1/2}<4$ the curves generated are simple \cite{sharpcondition}, and for $\|\lambda\|_{1/2}<4.0001$ the curves generated are not spacefilling \cite{spacefilling}. It is our goal to analyze what happens to the curves when some of these functions are composed with a continuous, non-decreasing stochastic process $(E_t)_{t\ge 0}$, called a random time change.

Among the simplest yet most important random time changes is the so-called inverse $\alpha$-stable subordinator, where $\alpha\in(0,1)$ is a parameter. With this specific time change $(E_t)$ assumed independent of $(B_t)$, the time-changed Brownian motion $B\circ E=(B_{E_t})_{t\ge 0}$ has been widely used to model subdiffusions, where particles spread at a slower rate than the usual Brownian particles. Indeed, the variance $\mathbb{E}[(B_{E_t})^2]$ takes the form $c_\alpha t^\alpha$, which grows more slowly for large $t$ than the variance of the Brownian motion. 
More detailed backgrounds and relevant references about SLE$_\kappa$ and random time changes are provided in Section \ref{background}.

Our main result reveals the fact that the time change extremely modifies the original curves, and phase transitions do not occur.  See Figure \ref{fig:BEt} which compares a sample curve in this particular case with a sample curve in the untime-changed setting.

\begin{thm}\label{thm:timechangebmnotsimple}
For any $\kappa >0$, almost surely the time-changed Brownian motion process $(\kappa B_{E_t})_{t\ge 0}$ does not generate a simple curve.
\end{thm}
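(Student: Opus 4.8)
The plan is to exploit the fact that the inverse $\alpha$-stable subordinator $E$ is almost surely constant on a dense random family of time intervals, so that on each such interval the driving function $\xi(t) := \kappa B_{E_t}$ is frozen and the Loewner flow grows a vertical slit, while immediately to one side of such an interval $\xi$ inherits the roughness of Brownian motion, sped up by the time change. The heart of the argument is that this speed-up pushes the local regularity of $\xi$ strictly below the H\"older-$\tfrac12$ threshold, so that the geometry at a freezing time falls into the non-simple regime; because the resulting effective H\"older-$\tfrac12$ size is infinite, no value of $\kappa$ can rescue simplicity.

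First I would record the structural facts about $E$: almost surely $E$ has infinitely many, indeed a dense set of, flat intervals $[s,s']$, these being the intervals swept out by the jumps of the underlying stable subordinator $D$, with $E\equiv v$ on $[s,s']$ and $E$ increasing immediately to one side. On such an interval $\xi$ is constant, so in the coordinates of $g_s$ the hull grows as a vertical segment based at the tip $\gamma(s)$. This one-sidedness, flat on one side and active on the other, is exactly the feature that lets me compare the local picture with a square-root driving function.

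Next I would estimate the local growth of $\xi$ at an endpoint $t_0$ of a flat interval. Writing $\xi(t_0+h)-\xi(t_0)=\kappa\,(B_{E_{t_0+h}}-B_{v})$, I would combine a lower bound on the active-side increments $E_{t_0+h}-v$, coming from the regenerative structure of the inverse stable subordinator near a point of increase (where increments can be of order $h^{\beta}$ with $\beta<1$), with the exact local modulus of continuity of Brownian motion (the law of the iterated logarithm), to produce a sequence $h_n \downarrow 0$ along which $|\xi(t_0+h_n)-\xi(t_0)|\ge a\,h_n^{\,r}$ for some $r\in(0,\tfrac12)$. Since $r<\tfrac12$, the ratio $|\xi(t_0+h_n)-\xi(t_0)|/\sqrt{h_n}\to\infty$, so the time change forces the effective H\"older-$\tfrac12$ size to blow up; this is precisely why the fixed factor $\kappa$ is irrelevant. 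I would then feed this one-sided fast growth, together with the constancy on the other side, into the non-simplicity criterion developed earlier in the paper, the same mechanism that makes a time-changed square root non-simple (cf.\ Theorem \ref{thm:timechangesqrtnotsimple}), after a recentering and rescaling placing the slit base $\gamma(s)$ at a convenient position (Proposition \ref{thm:rescaledhullslines}). This yields a touch of the growing hull against the earlier slit, equivalently the swallowing of a region, so the hull is not a simple curve. The almost-sure conclusion then follows because such a configuration occurs at some flat interval with probability one, by a Borel--Cantelli argument over the abundant family of flat intervals.

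The main obstacle I expect is the two-sided quantitative estimate of the previous step: marrying the local growth of the inverse subordinator at a freezing time with the exact modulus of continuity of $B$, coordinating the two sequences robustly enough that the hypotheses of the non-simplicity criterion hold with an effective constant past the threshold. A secondary difficulty is the localization, since the criterion is cleanest for departure from the origin whereas here the slit is based at the interior boundary point $\gamma(s)$; transporting it requires controlling the conformal map $g_s$ near that tip and verifying that the square-root comparison survives the change of coordinates.
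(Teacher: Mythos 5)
Your high-level intuition is right (the time change pushes the driving function past the square-root threshold, and this should be fed into Theorem \ref{thm:timechangesqrtnotsimple}(b)), but the execution fails at the key probabilistic step. Two mechanical problems first. \emph{Sidedness}: you estimate growth on the active side \emph{after} a flat interval, $|\xi(t_0+h)-\xi(t_0)|\ge a h^r$ for $h>0$, but the criterion (Theorem \ref{thm:lindrobinsthm}, hence Theorem \ref{thm:timechangesqrtnotsimple}(b)) requires fast growth \emph{approaching} a time $T$ from the left, $|\lambda(T)-\lambda(t)|\ge 4\sqrt{T-t}$ on $[T-\epsilon,T]$. This distinction is not cosmetic: the forward square root $c\sqrt{t}$ generates a simple line segment for every $c$, while only the backward square root $c\sqrt{1-t}$, $c\ge 4$, is non-simple. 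Fast growth \emph{out of} a frozen slit creates no touching: by the concatenation property the continuation is a conformal image of a new hull attached at the tip, so your "touch against the earlier slit" does not occur by this mechanism. \emph{Sequence vs.\ interval}: you produce the lower bound only along a sequence $h_n\downarrow 0$, whereas both Theorem \ref{thm:lindrobinsthm} and the hypothesis of Theorem \ref{thm:timechangesqrtnotsimple}(b) require it for \emph{all} $t$ in a one-sided neighborhood; a driving function that keeps returning to the value $\lambda(T)$ between the times $h_n$ can perfectly well stay simple.

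The deeper, fatal issue is your choice of special times, and no sharpening of the estimates can repair it. A flat-interval endpoint has level $v=E_{t_0}$ equal to a jump time of $D$, so $v$ is $D$-measurable and independent of $B$. Conditionally on $D$, $v$ is a deterministic time for the Brownian motion, and at any such time $B$ almost surely crosses the level $B_v$ infinitely often on both sides; hence a full-interval power lower bound $|B_{v\pm u}-B_v|\ge a u^{r}$ is a.s.\ \emph{false} at every flat-interval endpoint, and the LIL (a limsup statement) cannot supply one. Moreover, Theorem \ref{thm:timechangesqrtnotsimple}(b) requires the special time $\tau$ (in the time scale of $X=\kappa B$) to be \emph{independent of $D$}, while your $v$ is a function of $D$ — the hypothesis fails on that count as well. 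The paper resolves exactly this dilemma by taking $\tau=\tau_c^{B}$, the hitting time of a level $c$ by $B$: it is $B$-measurable (hence independent of $D$, as required), and by Williams' path decomposition the time-reversed pre-$\tau_c$ path is a $3$-dimensional Bessel process, whose Shiga--Watanabe lower bound $Y_t\ge \sqrt{t}\,(\log(1/t))^{-\eta}\ge h t^{a}$ with $a\in(1/2,1/(2\alpha))$ holds on a \emph{full} interval $[0,\epsilon]$ — precisely the uniform, left-sided estimate your argument lacks; Theorem \ref{thm:timechangesqrtnotsimple}(b) then finishes the proof. (Incidentally, your worry about recentering is unnecessary: Theorem \ref{thm:lindrobinsthm} applies at any time $T$, so neither control of $g_s$ near the tip nor Proposition \ref{thm:rescaledhullslines} is needed.)
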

To prove this, we use a result in \cite{lindrobins} to first derive general criteria (Theorem \ref{thm:timechangesqrtnotsimple}) for verifying whether the curves generated by time-changed functions are simple or non-simple. 
The proof of Theorem \ref{thm:timechangebmnotsimple}
also relies on a deep relationship between Brownian motion and a 3-dimensional Bessel process given in \cite{williams} and local behaviors of Bessel processes studied in \cite{shiga}. 
\begin{figure}
    \centering
    \includegraphics[width=4.9in]{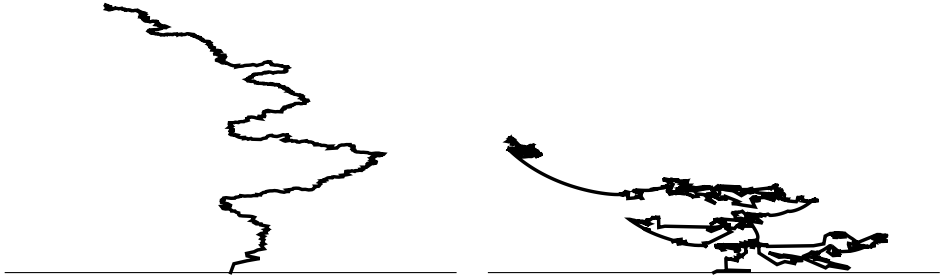}
    \caption{Sample SLE$_1$ curve (left) and sample curve generated by a time-changed Brownian motion (right).}
    \label{fig:BEt}
\end{figure}

We also investigate the scaling limits of random curves generated by time-changed self-similar processes.  In particular, Corollary \ref{cor:rescaledhullslines} shows that rescaling the curves generated by $\lambda(t)=\kappa B_{E_t}$ leads to deterministic sets, as observed in \cite{chenrohde} for curves generated by a symmetric stable process (without a time change). 

To further understand the effect of the random time change, we explore some examples of
curves generated by time-changed deterministic functions, including a time-changed Weierstrass function.  To aid our analysis of the deterministic examples, we also derive a condition on $\lambda(t)$ that guarantees that the generated curves leave the real line tangentially:

\begin{prop}\label{tangentialdeparture}
Suppose that $\l(0)=0$ and $\l(t) \geq a t^r$  where $a>0$ and $r \in (0, 1/2]$.  
Then for $t$ small enough, the  hull $K_t$ driven by $\l$ is contained in the region 
$\{ x+iy \, : \, 0\leq x, \, 0<y< \frac{26}{a} \,x^{2-2r} \}$.
\end{prop}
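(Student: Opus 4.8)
The plan is to work directly with the Loewner flow and to prove the contrapositive: every $z_0 \in \H$ outside the stated region survives to time $t$. Since $K_t$ is exactly the set of points swallowed by time $t$, this gives $K_t$ inside the region. Write $g_s(z_0) = x_s + i y_s$ for the solution of $\dot g_s = 2/(g_s - \lambda(s))$ started at $z_0 = x_0 + i y_0$. Separating real and imaginary parts,
$$\dot x_s = \frac{2(x_s - \lambda(s))}{(x_s-\lambda(s))^2 + y_s^2}, \qquad \frac{d}{ds}\log y_s = \frac{-2}{(x_s-\lambda(s))^2 + y_s^2},$$
and $z_0 \in K_t$ precisely when $y_s \to 0$ at some time $\le t$. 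So it suffices to show $\inf_{s \le t} y_s > 0$ whenever $z_0 \notin R$, i.e. whenever $x_0 < 0$, or $x_0 \ge 0$ and $y_0 \ge \tfrac{26}{a} x_0^{2-2r}$. In all of these cases $x_0 \le (a y_0/26)^{1/(2-2r)}$, so the point starts far to the upper-left relative to where the driving function will be.

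First I record two a priori bounds from the flow: $\tfrac{d}{ds} y_s^2 \ge -4$, and, by AM--GM on the denominator, $\dot x_s \le 1/y_s$. The first gives $y_s^2 \ge y_0^2 - 4s$, so any point with $y_0 > 2\sqrt t$ survives; hence $K_t \subseteq \{0 < y \le 2\sqrt t\}$ and we may assume $y_0 \le 2\sqrt t$, and moreover $y_s$ stays comparable to $y_0$ for times of order $y_0^2$. The core of the proof is to bound $\log(y_0/y_t) = 2\int_0^t \big[(x_s-\lambda(s))^2 + y_s^2\big]^{-1}\,ds$ by a constant strictly below $\tfrac12\log 2$, which forces $y_t > 0$. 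The mechanism is that the driving function races rightward at rate at least $a s^r$, while $x_s$ can grow no faster than $1/y_s$ from its small initial value, so the separation $\lambda(s) - x_s$ becomes and stays large: once $\lambda(s) \ge a s^r$ exceeds $2x_s$ we have $(x_s-\lambda(s))^2 \ge a^2 s^{2r}/4$, and the tail of the integral is dominated by $\tfrac{8}{a^2}\int s^{-2r}\,ds$, which is finite for $r < 1/2$. On the complementary initial window $[0,s_*]$, where the point is still near $\lambda$, the elapsed time is of order $y_0^2$, so the a priori bound keeps $y_s \ge c\,y_0$ and contributes at most $\asymp s_*/y_0^2$.

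The exponent $2-2r$ and the constant $26$ are exactly what make the two regimes fit together. The window closes at $s_* \asymp (x_0/a)^{1/r}$, and the requirement $s_* \lesssim y_0^2$ reduces, via $x_0 \le (a y_0/26)^{1/(2-2r)}$, to the inequality $2r \le \tfrac{1}{2-2r}$, that is $(2r-1)^2 \ge 0$, which holds for all $r \in (0,1/2]$ with equality at $r = 1/2$; choosing the constant $26$ makes the two contributions sum below $\tfrac12\log 2$. The case $x_0 < 0$ is easy: since $\lambda(s) \ge a s^r > 0$, the point is never overtaken, $x_s - \lambda(s) \le x_0 - a s^r$ stays negative and bounded away from $0$, and $y_s$ is frozen. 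The endpoint $r = 1/2$ is the scale-invariant (cone) case; there the crude tail bound $\tfrac{8}{a^2}\int s^{-1}\,ds$ is only logarithmic and need not be small, and the point need not cross $\lambda$ at all, so instead one checks directly — using the explicit line-segment solution for $\lambda(s) = c\sqrt s$ and its near-vertical asymptotics as $c \to 0$ — that the relevant geometric constant lies below $26$.

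I expect the main obstacle to be the \emph{global} control of $x_s$ underlying the tail estimate. Because only the lower bound $\lambda(s) \ge a s^r$ is assumed, $\lambda$ may be non-monotone and may repeatedly return close to the slowly moving point, so one cannot argue that the point escapes once and for all. Promoting the pointwise bound $\dot x_s \le 1/y_s$ into a uniform separation $\lambda(s) - x_s \gtrsim a s^r$ on all of $[0,t]$ will require a bootstrap/continuity argument run on the first time $y_s$ could fall to, say, $y_0/2$, together with careful bookkeeping of constants so that the initial-window and tail contributions to the integral add up to strictly less than $\tfrac12\log 2$, guaranteeing $y_t > 0$.
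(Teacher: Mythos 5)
Your strategy is genuinely different from the paper's: the paper never analyzes the flow of individual points, but instead localizes Chen--Rohde's Lemma~\ref{lem:lemma33}(b) into Lemma~\ref{fromCRlem} (any $\lambda$ with $\lambda(t)\ge k\sqrt t$ on $[0,1]$ and $\lambda>3.5$ afterwards generates hulls avoiding $[1,2]\times[26/k,\infty)$) and then feeds in the dyadic rescalings $\lambda_n(t)=2^n\lambda(2^{-2n}t)$, whose constant $a2^{n(1-2r)}$ improves with $n$ when $r<1/2$. Your direct ODE/survival plan is sound in outline for $r<1/2$, and your exponent arithmetic (the reduction to $(2r-1)^2\ge 0$) is correct; but the step you yourself single out as the main obstacle is a genuine gap, and the fix you propose would fail. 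A bootstrap run on the first time $y_s$ drops to $y_0/2$ yields only $x_s\le x_0+2s/y_0$, and this linear bound overtakes $\tfrac14 as^r$ by time $\asymp (ay_0)^{1/(1-r)}$, which for points with small $y_0$ (exactly the points near the origin that a tangency statement is about) is far smaller than $t$; no bookkeeping of constants repairs this. The missing ingredient is a structural fact your own equations contain but which you never use: $\dot x_s$ has the sign of $x_s-\lambda(s)$, so $x_s$ can increase only while $x_s>\lambda(s)\ge as^r$, and since the floor $as^r$ is increasing, at the first time $\sigma$ with $x_\sigma\le\tfrac12 a\sigma^r$ the point is trapped for good: $x_s\le x_\sigma$ and $\lambda(s)-x_s\ge\tfrac12 as^r$ for all $s\ge\sigma$. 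With this trapping, the tail integral $\tfrac{8}{a^2}\int_\sigma^t s^{-2r}\,\mathrm{d}s$ need only be \emph{finite} --- not below $\tfrac12\log 2$ --- since after time $\sigma$ you no longer need to control $x_s$ through $y_s$; smallness is needed only on $[0,\sigma]$, where the unconditional bound $y_s^2\ge y_0^2-4s$ already suffices. Your insistence on smallness of the whole integral is both unnecessary and, at the endpoint, impossible.

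That endpoint, $r=1/2$, is the second genuine gap: your proposed substitute is not a proof. There is no comparison principle for the Loewner equation that transfers hull containment from the explicit solution $\lambda(s)=c\sqrt s$ to every driving function satisfying only the one-sided bound $\lambda(s)\ge a\sqrt s$; such a $\lambda$ may, for instance, contain downward $4\sqrt{T-s}$-type bursts and generate non-simple hulls, geometry the line segment never exhibits. The scale-invariant case is precisely where every easy mechanism degenerates: your tail bound is only logarithmic, the trapping argument above still requires showing that $\sigma$ occurs before $y_s$ degenerates, which under the hypothesis $y_0\ge(26/a)x_0$ becomes a genuine comparison between $26$ and the size of $a$ that your sketch never performs, and indeed even the paper's own proof strains here: it chooses $N$ with $a2^{N(1-2r)}\ge 3.5$, which at $r=1/2$ is possible only when $a\ge 3.5$. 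So $r=1/2$ demands a real argument, and neither your sketch nor the invoked explicit solution supplies one.
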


This proposition may be of independent interest, as it provides a converse of sorts to recent work by Lau and Wu \cite{lauwu}.
In particular, Lau and Wu show that if the curve leaves the real line tangentially by lying in the domain $\{ x+iy \, : \, 0< x, \, ax^r < y < bx^r \}$ for $r>1$, then we have some control on the associated driving function, namely $\limsup_{t \to 0} t^{-1/(r+1)} |\l(t)| < \infty.$  They also analyze a particular family of tangential curves, which generalizes a result  of Prokhorov and Vasil'ev \cite{prokhorovvasilev}.

Generalizations of SLE$_\kappa$ to the case of the time-changed Brownian motion are considered and numerically analyzed in \cite{Nezhadhaghighi2011,credidio2015}. However, as far as we know, our investigation in this paper provides the first theoretical account of geometric properties of random curves associated with a large class of time-changed functions.  

Time change is not the only adaptation of SLE that has been considered.
Another example of an SLE variant is found in \cite{rushkinetal} and  \cite{guanwinkel}, where an $\alpha$-stable L\'evy process is added to the Brownian motion, which adds jumps to the driving function. It was shown that the phases of the hulls were unchanged by this addition, but the spread of the hull along the real line changes based on $\alpha$ due to the jumps.  

We end this section with comments on the organization of the paper. 
 In Section \ref{background}, we discuss the Loewner equation and random time changes.  
 Section \ref{scalingarguments} contains results on rescaled hulls and the proof of Proposition \ref{tangentialdeparture}.
 In the last section, we state and prove the criteria (Theorem \ref{thm:timechangesqrtnotsimple}) for verifying whether the curves generated by time-changed functions are simple or non-simple, we prove Theorem \ref{thm:timechangebmnotsimple}, and we discuss examples of time-changed deterministic driving fucntions.


\section{Background}\label{background}

\subsection{Loewner Equation}

Let $\mathbb{H}=\{x+iy\in \mathbb{C}: y> 0\}$ denote the upper halfplane. 
Let $\lambda:[0,T]\to\mathbb{R}$ be continuous. The \textit{(chordal) Loewner equation} is given by
\begin{equation}\label{eqn:le}
    \frac{\partial}{\partial t}g_t(z)=\frac{2}{g_t(z)-\lambda(t)},\quad g_0(z)=z
\end{equation}
for $z\in\overline{\mathbb{H}}\setminus\{\lambda(0)\}$. We call $\lambda$ the \textit{driving function} of $g_t$. For $z\in\overline{\mathbb{H}}\setminus\{\lambda(0)\}$, there is a time interval so that (\ref{eqn:le}) has a solution, and we define $T_z$ to be the maximum such time, i.e., $T_z=\sup\{s\in[0,T):g_t(z)\text{ exists on }[0,s)\}$.  
So that it is defined on all of $\overline{\mathbb{H}}$, we set $T_{\lambda(0)} = 0$.
Define $K_t=\{z\in\overline{\mathbb{H}}:T_z\leq t\}$, meaning that $K_t$ is the collection of points so that $g_s(z)=\lambda(s)$ for some $s\leq t$.  We call $K_t$ the \textit{(Loewner) hull} generated (or driven) by $\lambda$. It can be shown that $\mathbb{H}\setminus K_t$ is simply connected and $g_t:\mathbb{H}\setminus K_t\to\mathbb{H}$ is conformal. Furthermore, $g_t$ is the unique conformal map with following expansion (called the \textit{hydrodynamic normalization}) near infinity:
\begin{equation}\label{eqn:hydronorm}
    g_t(z)=z+\frac{c(t)}{z}+O\left(\frac{1}{z^2}\right).
\end{equation}
One can further show that $c(t)=2t$.  This quantity is useful as it tells us about the size of the hull as viewed from infinity, and so we define the \textit{halfplane capacity} of $K_t$ as $\hcap(K_t) = c(t)/2=t$. 
It has the following probabilistic interpretation:
$$\hcap(K_t) = \frac{1}{2} \lim_{y \to \infty} y \, \E^{iy}{ \text{Im}(B_\tau)},$$
where $B_t$ is a Brownian motion started at $iy$ and stopped at $\tau = \inf\{ s \, : \, B_s \in \R \cup K_t \}$  
(see Proposition 3.41 in \cite{lawler}).
When the limit exists, define
\begin{equation}
    \gamma(t)=\lim_{y\downarrow 0}g_t^{-1}(\lambda(t)+iy).
\end{equation}
If $\gamma(t)$ exists and is continuous for every $t\in[0,T]$, we call $\gamma(t)$ the trace of $K_T$. 
When $\gamma(0,T]$ is a simple curve in $\mathbb{H}$, then $K_t=\gamma[0,t]$, and we say that $\lambda$ generates a simple curve.  It is possible that the trace $\gamma$ exists, but $\lambda$ does not generate a simple curve.  In this case, 
$K_t$ is the closure in $\overline{\mathbb{H}}$ of the complement of the unbounded component of $\mathbb{H}\setminus\gamma[0,t]$. Intuitively, this means that the boundary of $K_t$ is governed by $\gamma(t)$.

On the flipside, if we were to start with a family of continuously growing hulls $K_t$ 
($K_t\subset\overline{\mathbb{H}}$, $K_t=\overline{\mathbb{H}}\cap\overline{K_t}$, $\mathbb{H}\setminus K_t$ simply connected, $K_t$ right-continuous, $K_t$ continuously increasing), after possibly reparameterizing we can find a conformal map $g_t:\mathbb{H}\setminus K_t\to\mathbb{H}$ and a continuous function $\lambda:[0,T]\to\mathbb{R}$ satisfying (\ref{eqn:le}) and (\ref{eqn:hydronorm}). This gives a one-to-one correspondence between families of hulls and real-valued continuous functions. For more details, see Section 4.1 in \cite{lawler}.

\begin{figure}
    \centering
    \includegraphics[width=1.3in]{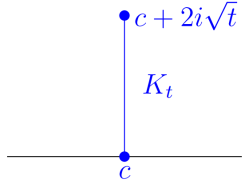}
    \caption{Hull driven by $\lambda(t)\equiv c$.}
    \label{fig:hullofconstantfunction}
\end{figure}

For a basic example, let $\lambda(t)\equiv c\in\mathbb{R}$. Then $g_t(z)=\sqrt{(z-c)^2+4t}+c$ and $K_t=\gamma[0,t]$, where $\gamma(t)=c+2i\sqrt{t}$ is the vertical slit from $c$ to $c+2i\sqrt{t}$ (see Figure \ref{fig:hullofconstantfunction}). This also shows that $\hcap([c,c+2i\sqrt{t}])=t$. 
On the other hand, for a non-constant $\lambda(t)$ we see a non-vertical hull growth that is not as tall as the hull of the constant driving function.

We mention two important properties of the Loewner equation: 
\begin{itemize}
\item Scaling Property:  If $K_t$ is generated by $\lambda(t)$, then for $r>0$,  the scaled hull $\frac{1}{r}K_{r^2t}$ is generated by $\frac{1}{r}\lambda(r^2t)$.
\item Concatenation Property: Let $\l : [0,T] \to \mathbb{R}$ generate $K_t$ and $g_t$, and let $s \in (0,T)$.  Define $\hat{\l}$ on $[s, T]$ to be the restricted function $\l |_{[s,T]}$, and let $\hat{K}$ be the final hull generated by $\hat{\l}$.
Then $K_T = K_s \cup g_s^{-1}( \hat{K} )$.
\end{itemize}

When $\lambda(t)=\sqrt{\kappa}B_t$ where $B_t$ is a Brownian motion starting at 0 and $\kappa>0$, a random family of curves is generated via the Loewner equation, and we call them the \textit{Schramm--Loewner Evolution} (SLE$_\kappa$). For SLE$_\kappa$, it is well-known   that almost surely the trace $\gamma(t)$ exists \cite{basicpropertiesofsle}. Note that the self-similarity of the Brownian motion (i.e. $(rB_t)_{t\ge 0}\stackrel{\textrm{d}}{=}(B_{r^2t})_{t\ge 0}$ for each $r>0$, where $\stackrel{\textrm{d}}{=}$ means equality in distribution) is the same as the scaling of the Loewner equation, which gives the same self-similarity of the hulls and traces (i.e. $(rK_t)_{t\ge 0}\stackrel{\textrm{d}}{=}(K_{r^2t})_{t\ge 0}$ and $(r\gamma(t))_{t\ge 0}\stackrel{\textrm{d}}{=}(\gamma(r^2t))_{t\ge 0}$).  
This self-similarity means that SLE$_\kappa$ is invariant under scaling by a real constant, 
which allows for different geometric behavior for different values of $\kappa$. 
Rohde and Schramm \cite{basicpropertiesofsle} proved that SLE$_\kappa$ exhibits phase transitions based on $\kappa$ as follows: 
\begin{itemize}
    \item $\kappa\in[0,4]:$ $\gamma(t)$ is a.s.\ a simple path in $\mathbb{H}\cup\{0\}$
    \item $\kappa\in(4,8):$ $\gamma(t)$ is a.s.\ a non-simple path
    \item $\kappa\in[8,\infty)$: $\gamma(t)$ is a.s.\ a spacefilling curve
\end{itemize}

In the deterministic setting, a natural class of functions to consider is H\"older continuous functions of exponent $\frac{1}{2}$, also known as Lip$(\frac{1}{2})$.  This is the set of functions such that $|\lambda(s)-\lambda(t)|\leq c\sqrt{|t-s|}$, and the norm, denoted $\|\lambda\|_{1/2}$, is the smallest such $c$. Note that for $r>0$, $r\|\lambda\|_{1/2}=\|\lambda(r^2t)\|_{1/2}$, which is the scaling of the Loewner equation. The phase transitions in this setting are different from SLE$_\kappa$ and slightly more complicated since for any $c>0$ we can find $\lambda\in\text{Lip}(\frac{1}{2})$ so that $\|\lambda\|_{1/2}=c$ and $\lambda$ generates a simple curve. Nevertheless, the following theorem and the example discussed afterwards show that  a
 deterministic phase transition occurs when $\|\lambda\|_{1/2}=4$.
\begin{thm}[\cite{sharpcondition}] \label{thm:cis4thm}
If $\lambda\in\text{Lip}(\frac{1}{2})$ with $\|\lambda\|_{1/2}<4$, then the domains $\mathbb{H}\setminus K_t$ generated by $\lambda$ are quasi-slit halfplanes.  In particular, $K_t$ is a simple curve in $\mathbb{H}$.
\end{thm}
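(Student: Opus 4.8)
The plan is to prove the stronger geometric conclusion — that each $\mathbb{H}\setminus K_t$ is a quasi-slit half-plane — and deduce simplicity as a free consequence, since a quasi-slit is in particular a Jordan arc. Recall that $\mathbb{H}\setminus K_T$ is a quasi-slit half-plane exactly when the trace $\gamma$ is a quasiconformal arc that meets $\mathbb{R}$ non-tangentially, so that $g_T^{-1}:\mathbb{H}\to\mathbb{H}\setminus K_T$ extends to a quasiconformal homeomorphism of $\overline{\mathbb{H}}$ fixing $\infty$ and preserving $\mathbb{R}$; equivalently, $\gamma$ satisfies Ahlfors' three-point condition and leaves $\mathbb{R}$ at a definite angle. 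By the Scaling Property, the hypothesis $\|\lambda\|_{1/2}<4$ is scale-invariant, so it suffices to produce a single cone condition, with half-aperture depending only on $\|\lambda\|_{1/2}$, that the flow respects at every scale; the scaling then promotes the local cone estimate to the uniform three-point condition characterizing quasiarcs.

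First I would track how the flow absorbs a fixed test point. For $z\in\mathbb{H}$ write $g_t(z)-\lambda(t)=X(t)+iY(t)$; differentiating \eqref{eqn:le} gives $\dot X=\tfrac{2X}{X^2+Y^2}-\dot\lambda$ and $\dot Y=-\tfrac{2Y}{X^2+Y^2}$, so $Y$ decreases strictly and $z$ is swallowed at $T_z$ precisely when $(X,Y)\to(0,0)$. Passing to the natural clock $ds=\tfrac{dt}{X^2+Y^2}$ decouples the radial decay as much as possible, and the point is that $\|\lambda\|_{1/2}<4$ controls the $\lambda$-increment against the scale of $Y$: if the ratio $R=X/Y$ stays bounded then $Y(t)^2\sim \tfrac{4(T_z-t)}{1+R^2}$ near absorption, so $|\lambda(t)-\lambda(T_z)|\le\|\lambda\|_{1/2}\sqrt{T_z-t}$ is comparable to $\|\lambda\|_{1/2}\,Y(t)$. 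Thus the driving term is forced to enter the analysis only through its natural $\sqrt{T_z-t}\sim Y$ weighting.

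The heart of the matter is a differential inequality for $R=X/Y$. A direct computation gives $\dot R=\tfrac{4R}{X^2+Y^2}-\tfrac{\dot\lambda}{Y}$, which under the natural clock becomes $\tfrac{dR}{ds}=4R-\tfrac{1}{Y}\tfrac{d\lambda}{ds}$; combined with the $Y$-scaling of the $\lambda$-increments this is morally a one-dimensional comparison of the shape $R'=4R-(\text{a term of size }\|\lambda\|_{1/2}\sqrt{1+R^2})$. When $\|\lambda\|_{1/2}<4$ the restoring coefficient $4$ dominates and the inequality admits a bounded forward-invariant interval $|R|\le R_0(\|\lambda\|_{1/2})$, so every swallowed point approaches the tip from within a fixed cone of half-aperture $\arctan R_0<\pi/2$; at $\|\lambda\|_{1/2}=4$ the invariant region escapes, which is exactly where the threshold lies. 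Since $\lambda$ is only Hölder and not differentiable, I would make this rigorous by comparison with the exactly solvable driving function $\lambda(t)=c\sqrt{t}$, whose hull is a straight line segment leaving $\mathbb{R}$ at a positive angle for $c<4$ and degenerates (becoming tangent, then spiralling) at $c=4$: using $\pm c\sqrt{\cdot}$ as upper and lower barriers in the concatenation sense sandwiches the flow into a cone, and the same family witnesses that $4$ is optimal.

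Finally, the uniform cone bound together with scaling invariance yields Ahlfors' three-point condition for $\gamma$ and non-tangential contact at the base point (the complementary regime to Proposition \ref{tangentialdeparture}, where fast drivers produce tangential departure), from which the quasiconformal reflection and the quasiconformal extension of $g_t^{-1}$ are built in the standard way; hence $\mathbb{H}\setminus K_t$ is a quasi-slit half-plane and $K_t=\gamma[0,t]$ is simple. I expect the main obstacle to be extracting the \emph{sharp} constant $4$: crude estimates on $\dot R$ lose it, so the delicate work is matching the Hölder increment of $\lambda$ to the correct $\sqrt{T_z-t}\sim Y$ scale and establishing forward-invariance of the cone with the optimal coefficient, rather than merely for sufficiently small norm.
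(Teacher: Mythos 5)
The paper does not prove this theorem; it is imported verbatim from \cite{sharpcondition}, so your attempt must be measured against Lind's published proof. Your skeleton --- reduce ``quasi-slit half-plane'' to a quasiarc meeting $\mathbb{R}$ non-tangentially, exploit scale invariance of $\|\lambda\|_{1/2}$, and extract the constant $4$ from an ODE in which a linear coefficient $4$ competes with a driving term of size $\|\lambda\|_{1/2}$ at the natural scale $\sqrt{T_z-t}\sim Y$ --- is genuinely in the spirit of that proof: Lind also gets $4$ from a quadratic with discriminant vanishing at $\sigma=4$ (square-root barriers for the real flow $\dot x = 2/(x-\lambda)$ give $B^2-\sigma B+4=0$, which has real roots iff $\sigma\ge 4$), and your fixed-point computation $16R^2=\sigma^2(1+R^2)$ locates the same threshold.

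However, there are genuine gaps. First, your comparison family is wrong: the hull driven by $\lambda(t)=c\sqrt{t}$ is a straight line segment, hence a simple non-tangential slit, for \emph{every} $c>0$; the angle tends to $0$ only as $c\to\infty$, and nothing degenerates at $c=4$. The phase transition at $4$ belongs to square-root behavior at the \emph{terminal} (swallowing) time, i.e.\ to $c\sqrt{1-t}$, which is precisely what Theorem \ref{thm:lindrobinsthm} encodes via $|\lambda(T)-\lambda(t)|\ge 4\sqrt{T-t}$. So both your barrier step and your claimed sharpness witness fail as stated; the barriers must be anchored at the absorption time, in the form $c\sqrt{T-t}$. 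Relatedly, your sign analysis points the wrong way: in your forward-time equation the linear term $+4R$ is \emph{repelling} (at the putative fixed point $R^*$ one has $\frac{d}{dR}\bigl(4R-\sigma\sqrt{1+R^2}\bigr)=(16-\sigma^2)/4>0$), so the interval $|R|\le R_0$ is not forward invariant under the forward flow; the restoring dynamics appear only for the time-reversed flow at the tip, and your asymptotic $Y(t)^2\sim 4(T_z-t)/(1+R^2)$ already presupposes bounded $R$, making the argument circular without a bootstrap. Second, the passage from a pointwise cone estimate for swallowed points to the existence of a continuous trace, the uniform Ahlfors three-point condition, and a quasiconformal extension of $g_t^{-1}$ is not ``standard'': in \cite{sharpcondition} this is exactly where the Marshall--Rohde slit-mapping machinery is invoked, and even the existence of the trace for $\|\lambda\|_{1/2}<4$ is part of what must be proved (it can fail for large norm). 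Until the barrier family, the direction of the comparison, and the qc-extension step are repaired, the proposal does not close.
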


When considering phase transitions, the key deterministic example  is $\lambda(t)=c\sqrt{1-t}$. For $c<4$, 
$\lambda$ generates a simple curve,
whereas if $c\geq 4$, then 
$\lambda$ does not generate a simple curve.
For more details, see \cite{knk}.
The behavior of this family for  $c\geq 4$ was leveraged in \cite{lindrobins} to show that the scaled Weierstrass function generates a non-simple hull for a large enough scale.  (See Section 4.2 for a further discussion of the Weierstrass example.)  We state their technique in the following theorem, which we will use in proving Theorem \ref{thm:timechangesqrtnotsimple}.

\begin{thm}[\cite{lindrobins}]\label{thm:lindrobinsthm}
Let $\lambda(t)$ be a continuous function. If $|\lambda(T)-\lambda(t)|\geq 4\sqrt{T-t}$ on $[T-\epsilon,T]$ for some $\epsilon>0$, then the hull generated by $\lambda$ at time $t=T$ is non-simple.
\end{thm}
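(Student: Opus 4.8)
The plan is to reduce the problem to a single fast sub-interval on which the bound holds throughout, normalize the driving function, and then exhibit one explicit real point that is absorbed into the hull by time $T$. Since a real point distinct from the base $\lambda(0)$ of the trace can lie in $K_T$ only if the generated curve touches $\R$ away from its starting point, producing such a point immediately forces $K_T$ to be non-simple.

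First I would invoke the Concatenation Property to discard everything before $T-\epsilon$. Restricting $\lambda$ to $[T-\epsilon,T]$ and relabeling time by $s=t-(T-\epsilon)$, the hypothesis becomes $|\mu(\epsilon)-\mu(s)|\ge 4\sqrt{\epsilon-s}$ on \emph{all} of $[0,\epsilon]$, where $\mu(s)=\lambda(T-\epsilon+s)$. Because $g_{T-\epsilon}^{-1}$ is a conformal homeomorphism of $\overline{\H}$ near the relevant boundary, the identity $K_T=K_{T-\epsilon}\cup g_{T-\epsilon}^{-1}(\hat K)$ shows that $K_T$ is non-simple as soon as the hull $\hat K$ generated by $\mu$ is non-simple. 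Translating the driving function (which merely translates the hull) I may assume $\mu(\epsilon)=0$; since $|\mu(s)|\ge 4\sqrt{\epsilon-s}>0$ on $[0,\epsilon)$, continuity forces $\mu$ to keep a constant sign there, and reflecting $x\mapsto -x$ if needed I reduce to the clean situation $\lambda(T)=0$ and $\lambda(t)\ge 4\sqrt{T-t}$ on $[0,T]$ (renaming $\epsilon,\mu$ back to $T,\lambda$).

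Now I would track the real point $x_0=2\sqrt{T}$, which satisfies $x_0<4\sqrt{T}\le\lambda(0)$, and set $V_t=g_t(x_0)$, so $\dot V_t=2/(V_t-\lambda(t))$ as long as $V_t<\lambda(t)$. The key estimate is the barrier bound $V_t\ge 2\sqrt{T-t}$. To prove it, let $W_t=2\sqrt{T-t}$ (so $W_0=x_0$) and note that at any contact time where $V_t=W_t$ one has $V_t-\lambda(t)\le 2\sqrt{T-t}-4\sqrt{T-t}=-2\sqrt{T-t}$, whence $\dot V_t=2/(V_t-\lambda(t))\ge -1/\sqrt{T-t}=\dot W_t$; this sign of the differential inequality is exactly what a standard barrier argument needs to prevent $V_t$ from ever dropping below $W_t$. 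With the barrier in hand there are two cases. If $V_t$ meets $\lambda(t)$ at some $\tau<T$, then $x_0$ is swallowed before time $T$ and we are done. Otherwise $0<\lambda(t)-V_t\le \lambda(t)-2\sqrt{T-t}$ for every $t<T$, and since $\lambda$ is continuous with $\lambda(T)=0$ the right-hand side tends to $0$ as $t\uparrow T$; hence $V_t-\lambda(t)\to 0$ and $x_0$ is swallowed exactly at time $T$. Either way $x_0=2\sqrt{T}\in K_T\cap\R$ with $x_0\neq\lambda(0)$, so $K_T$ cannot be a simple arc, which proves the theorem.

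The main obstacle is making the comparison point in the right direction. A naive monotonicity of the form ``increasing the driving function advances the swallowing of a fixed point'' is false: a larger $\lambda$ can drag the tracked point leftward and postpone absorption, and indeed comparing the full flow of $x_0$ under $\lambda$ with its flow under $4\sqrt{T-t}$ yields only $V_t\ge 2\sqrt{T-t}$, the wrong inequality for forcing swallowing on its own. The crux, and the way around this, is to decouple the two hypotheses: use the pointwise bound $\lambda(t)\ge 4\sqrt{T-t}$ only through the one-sided barrier $V_t\ge 2\sqrt{T-t}$, and use the terminal condition $\lambda(T)=0$ separately to squeeze the gap $\lambda(t)-V_t$ down to zero. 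A secondary item requiring care is the topological bookkeeping in the concatenation step, namely verifying that a conformal homeomorphism of $\overline{\H}$ carries a non-simple hull to a non-simple hull.
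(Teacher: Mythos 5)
Your overall strategy is sound, and it is essentially the argument behind the cited result: note that this paper does not prove Theorem \ref{thm:lindrobinsthm} at all but quotes it from \cite{lindrobins}, where the proof likewise reduces to $\lambda(T)=0$, $\lambda(t)\geq 4\sqrt{T-t}$, and swallows a real point by comparison with the critical square-root dynamics. Your barrier $W_t=2\sqrt{T-t}$ is in fact the exact solution of the Loewner ODE for the driving function $4\sqrt{T-t}$ started at $2\sqrt{T}$ (seek $V_t=b\sqrt{T-t}$; then $b^2-4b+4=0$ has the double root $b=2$), which explains why your comparison has no slack and why, as you correctly observe, the terminal condition $\lambda(T)=0$ must be fed in separately to force the squeeze.

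Two steps, however, do not hold as written and need the standard repairs. First, the barrier step: a non-strict derivative comparison at contact times ($V_t=W_t\Rightarrow\dot V_t\geq\dot W_t$) is not by itself a valid barrier principle; for instance $W\equiv 0$, $\dot V_t=-\sqrt{|V_t|}$, $V_0=0$ satisfies the contact condition yet $V_t=-t^2/4$ crosses below. Moreover, precisely because your comparison is critical, no strict contact inequality is available: for $W_t=c\sqrt{T-t}$ the requirement $\dot V_t>\dot W_t$ at contact reduces to $(c-2)^2<0$. The correct run of the argument uses strictness \emph{below} the barrier: if $h:=V-W<0$ on an excursion $(t_0,t_1]$ with $h(t_0)=0$, then there $V_t-\lambda(t)<-2\sqrt{T-t}$ strictly, so $\dot V_t>-1/\sqrt{T-t}=\dot W_t$, and integrating $\dot h>0$ over $(t_0,t_1]$ gives $h(t_1)>0$, a contradiction. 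Second, the concatenation step: your justification that $g_{T-\epsilon}^{-1}$ is ``a conformal homeomorphism of $\overline{\H}$ near the relevant boundary'' is unavailable, since $\lambda$ on $[0,T-\epsilon]$ is an arbitrary continuous function, so $\partial K_{T-\epsilon}$ may fail to be locally connected and $g_{T-\epsilon}^{-1}$ need not extend continuously to $\R$; also $g_{T-\epsilon}^{-1}(x_0)$ need not be a point of $\R\setminus\{\lambda(0)\}$. Argue the contrapositive instead: if $K_T=\gamma[0,T]$ were a simple curve, then by the concatenation property $\hat K_s=g_{T-\epsilon}\bigl(\gamma(T-\epsilon,T-\epsilon+s]\bigr)$, which is a simple curve in $\H$; it is continuous at its base with initial point $\mu(0)=\lambda(T-\epsilon)$ because $\hat\gamma(s)\in\hat K_s$ and $\diam\hat K_s\to 0$ as $s\downarrow 0$ (by the halfplane-capacity and oscillation bounds on hulls of small driving intervals). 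Hence $\mu$ would generate a simple curve with $\hat K\cap\R=\{\mu(0)\}$, contradicting your swallowed point $x_0\neq\mu(0)$. With these two replacements your proof is complete and matches the cited approach.
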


\subsection{L\'evy Processes and Random Time Changes}

One of our goals is to consider a Brownian motion that has been time-changed by the inverse of a stable subordinator. We will use the inverse in order for our time change to be continuous. However, knowing how the subordinator behaves will help us see how its inverse behaves. In this subsection, we will set up the necessary definitions and develop some intuition about these processes. 
The discussion begins with the definition of L\'evy processes as they include Brownian motion and stable subordinators as special cases.
Throughout the paper, given stochastic processes are assumed to be defined on a probability space $(\Omega,\mathcal{F},\mathbb{P})$, take values in $\mathbb{R}$, start at 0, and have right-continuous sample paths with left limits.

A stochastic process $X=(X_t)_{t\geq 0}$ is called a \textit{L\'evy process} if it is stochastically continuous (i.e.\ for $\epsilon>0$ and $t\geq 0$, $\lim_{s\to t}\P(|X_{s}-X_t|>\epsilon)=0$) and has stationary and independent increments. 
If $X$ is a L\'evy process, then for each $t>0$, the random variable $X_t$ is infinitely divisible and its distribution is characterized by the triplet $(b,\sigma^2,\nu)$ appearing in the so-called L\'evy-Khintchine formula 
\[
	\mathbb{E}[e^{i uX_t}]=e^{t\eta(u)} \ \  \textrm{with} \ \ \eta(u)=ibu-\frac 12 \sigma^2u^2+\int_{\mathbb{R}\setminus \{0\}}(e^{iuy}-1-iuy\mathbf{1}_{|y|<1})\nu(\textrm{d}y),
\]
where $\mathbb{E}$ denotes the expectation under $\mathbb{P}$. 
 Here, $b\in\mathbb{R}$, $\sigma^2\ge 0$, and $\nu$ is a Borel measure on $\mathbb{R}\setminus \{0\}$ with $\int_{\mathbb{R}\setminus \{0\}}(|y|^2\wedge 1)\nu(\textrm{d}y)<\infty$, called the \textit{L\'evy measure} of $X$. 
Since sample paths of a L\'evy process are assumed to be right-continuous with left limits, they have at most countably many jumps, but for each $\epsilon>0$, they have only finitely many jumps of size $\epsilon$ or larger. The L\'evy measure $\nu$ controls the jumps of the L\'evy process. 
In particular, Brownian motion is a L\'evy process with triplet $(b,\sigma^2,\nu)=(0,1,0)$ so that $\mathbb{E}[e^{i uX_t}]=e^{-\frac 12 u^2t}$. 

A L\'evy process with non-decreasing sample paths is called a \textit{subordinator.} The distribution of a subordinator $D=(D_t)_{t\ge 0}$ is characterized by its Laplace transform
\[
	\mathbb{E}[e^{uD_t}]=e^{-t\psi(u)} \ \ \textrm{with} \ \ \psi(u)=bu+\int_0^\infty (1-e^{-uy})\nu(\textrm{d}y), 
\]
where the L\'evy measure $\nu$ satisfies $\nu(-\infty,0)=0$ and $\int_0^\infty (y\wedge 1) \nu(\textrm{d}y)<\infty$. The function $\psi(u)$ is called the \textit{Laplace exponent} of $D$.  
In this paper, we only consider a subordinator $D$ with $b=0$ and $\nu(0,\infty)=\infty$, which implies that $D$ has strictly increasing sample paths with $\lim_{t\to\infty}D_t=\infty$ and the jump times of $D$ are dense in $(0,\infty)$ (see \cite{Sato}). 
  Examples of theoretically and practically important subordinators include: 
\begin{itemize}
\item a \textit{stable subordinator} of index $\alpha\in(0,1)$ (or an $\alpha$-stable subordinator for short), where $\nu(\textrm{d}x)=\frac{\alpha}{\Gamma(1-\alpha)}x^{-\alpha-1}\mathbf{1}_{x>0}\,\textrm{d}x$ and $\psi(u)=u^\alpha$, and 
\item a \textit{tempered stable subordinator} of index $\alpha\in(0,1)$ and tempering factor $\theta>0$, where $\nu(\textrm{d}x)=\frac{\alpha}{\Gamma(1-\alpha)}e^{-\theta x}x^{-\alpha-1}\mathbf{1}_{x>0}\,\textrm{d}x$ and $\psi(u)=(u+\theta)^\alpha-\theta^\alpha$. 
\end{itemize}
Tempered stable subordinators may be regarded as a one-parameter extension of stable subordinators via $\theta$. However, they possess very different properties. Indeed, while a stable subordinator has infinite first moment, a tempered stable subordinator has finite moments of all orders due to the factor $e^{-\theta x}$ which diminishes (or ``tempers'') large jumps of the stable subordinator of the same index (see \cite{Rosinski_tempering} for a detailed account of more general tempered stable L\'evy processes). On the other hand, an $\alpha$-stable subordinator is the only subordinator which is self-similar with index $1/\alpha$; i.e.\ $(D_{ct})\stackrel{\textrm{d}}{=}(c^{1/\alpha} D_t)$ for all $c>0$ (see \cite{EmbrechtsMaejima}).

The L\'evy--It\^o decomposition allows us to develop some intuition about the jumps of subordinators, which will in turn help us view their inverses. 
Given a L\'evy process $X$, for each Borel set $A$ of $\mathbb{R}\setminus\{0\}$ and $t\geq 0$, define
\begin{equation}
    N(t,A)(\omega)=\#\{0\leq s\leq t:\Delta X_s(\omega)\in A\}, 
\end{equation}
where $\Delta X_s$ is the size of the jump at $s$.  
For fixed $t>0$ and $\omega\in\Omega$, $N(t,\cdot)(\omega)$ is a counting measure on the collection of Borel sets of $\mathbb{R}\setminus\{0\}$. For $A$ bounded below, $(N(t,A))_{t\geq 0}$ is a Poisson process with intensity $\mu(A)=\mathbb{E}(N(1,A))$. 
$N(t,A)$ is called the Poisson random measure associated with $X$. 
For $A$ bounded below, $t>0$ and $\omega\in\Omega$, define the Poisson integral of $x$ with respect to the random measure as the random finite sum
\begin{equation}
    \int_A xN(t,\textrm{d}x)(\omega)=\sum_{x\in A}xN(t,\{x\})(\omega)
    =\sum_{0\le s\le t}\Delta X_s(\omega) \mathbf{1}_A(\Delta X_s(\omega)).
\end{equation}
Define $\widetilde{N}$, the compensated Poisson random measure of $X$, by $\widetilde{N}(t,A)=N(t,A)-\mathbb{E}(N(t,A))=N(t,A)-t\mu(A)$. 
The L\'evy--It\^o decomposition states that any L\'evy process $X$ can be expressed as 
\begin{equation}
    X_t=b_1 t+\sigma B_t+\int_{|x|<1}x\widetilde{N}(t,\textrm{d}x)+\int_{|x|\geq 1}xN(t,\textrm{d}x),
\end{equation}
where $b_1\in\mathbb{R}$ and $\sigma B_t$ is a scaled Brownian motion independent of the Poisson random measure $N$. 
Note that the terms $\int_{|x|<1}x\widetilde{N}(t,\textrm{d}x)$ and $\int_{|x|\geq 1}xN(t,\textrm{d}x)$ represent small jumps and large jumps of $X$, respectively. 
It also follows that $X$ has finite variation if and only if its L\'evy--It\^o decomposition can be rewritten as 
\begin{equation}
    X_t=\left(b_1-\int_{|x|<1}x\nu(\textrm{d}x)\right)t+\int_{\mathbb{R}\setminus \{0\}} xN(t,\textrm{d}x). 
\end{equation}

\begin{figure}
    \centering
    \begin{minipage}{2in}
    \includegraphics[width=2in]{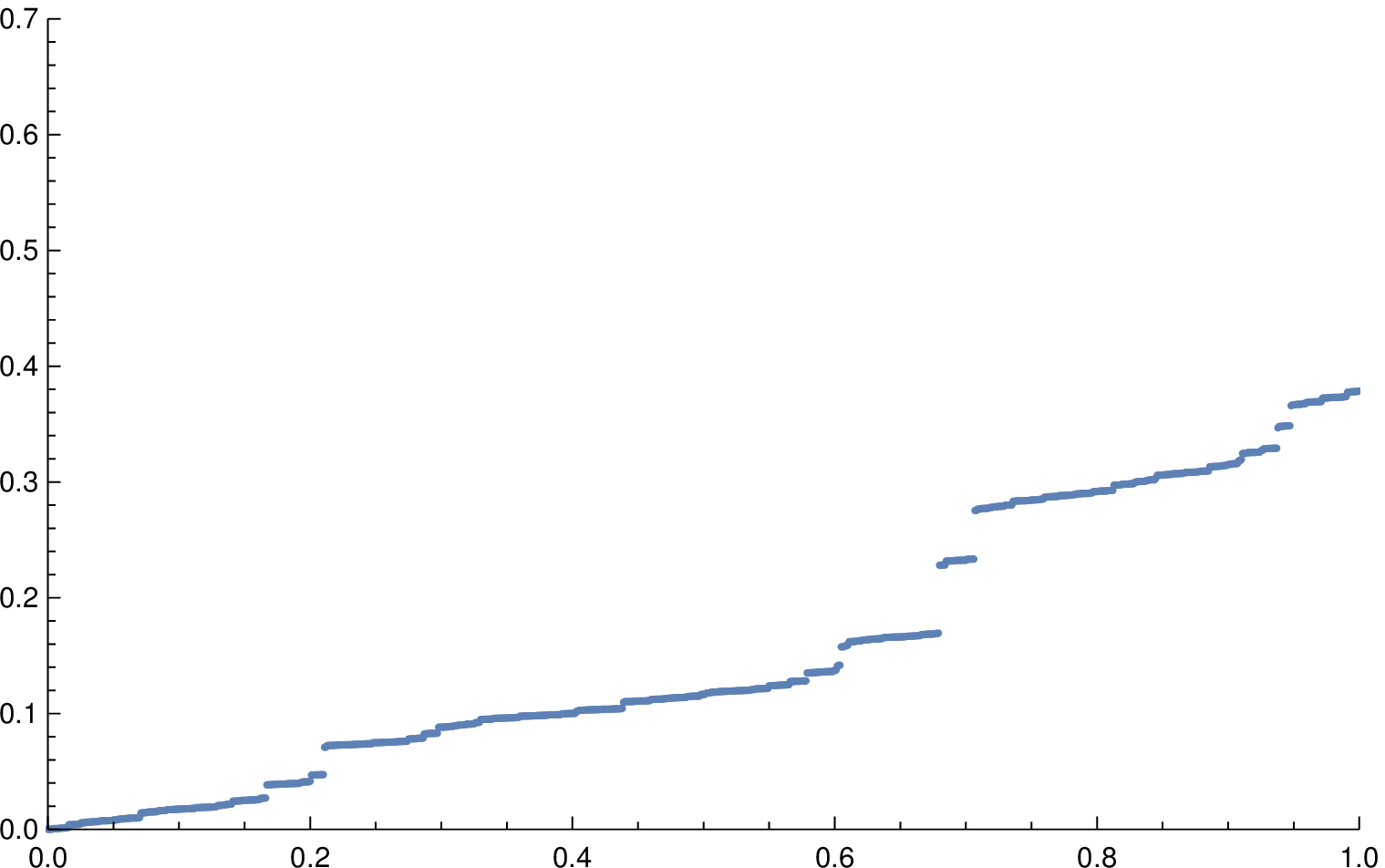}
    \end{minipage}$\qquad$
    \begin{minipage}{2in}
    \includegraphics[width=2in]{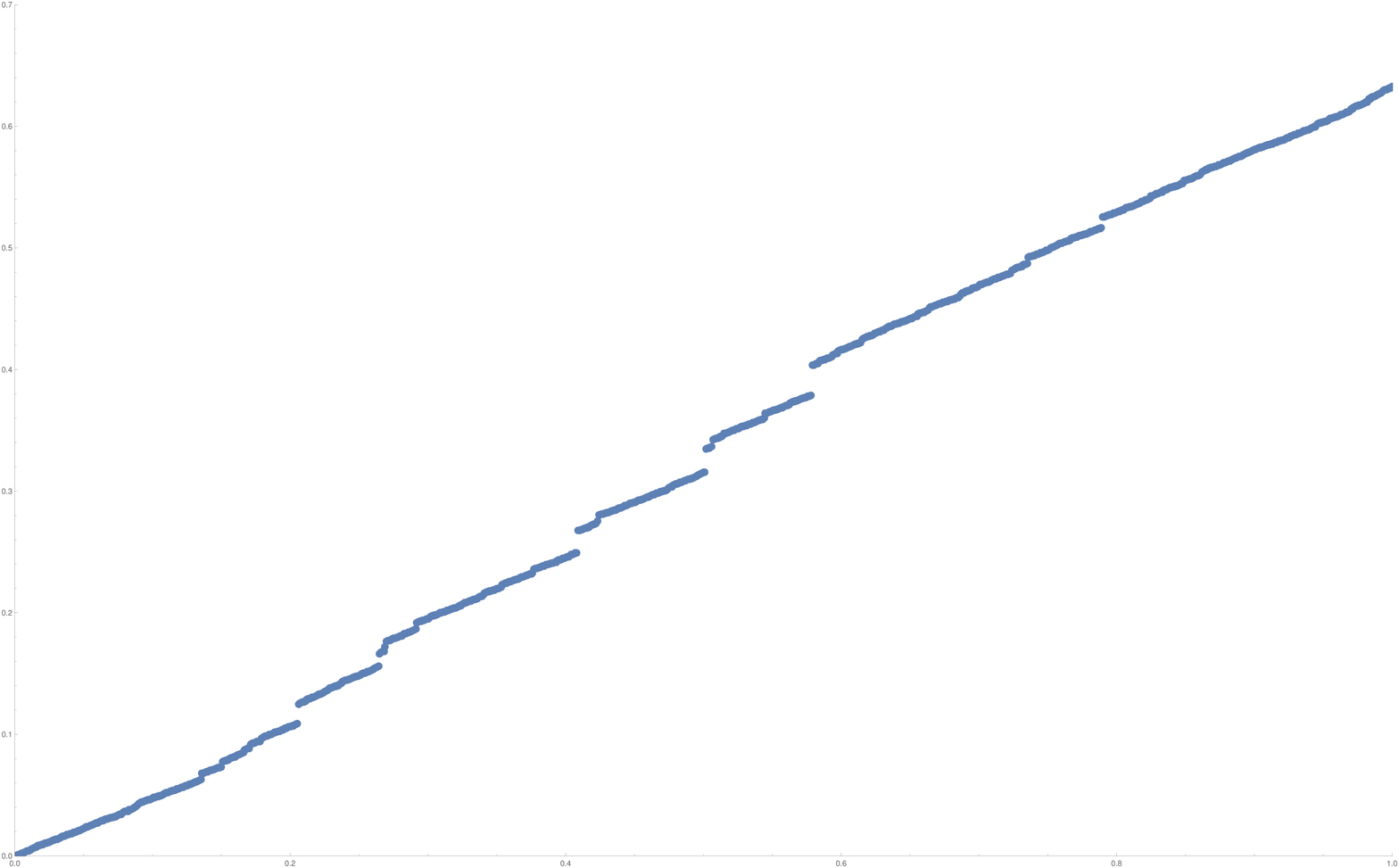}
    \end{minipage}
\caption{Sample paths of a stable subordinator with $\alpha=0.7$ (left) and $\alpha=0.9$ (right).}
\label{fig:subordinatorsamplepaths}
\end{figure}

An $\alpha$-stable subordinator $D$, which is strictly increasing (and hence of finite variation), can be expressed as 
\begin{equation}
    D_t=\lim_{n\to\infty}\left(c_n t+\int_{x\geq\epsilon_n}xN(t,\textrm{d}x)\right)=\int_{x>0}xN(t,\textrm{d}x),
\end{equation}
where $\epsilon_n\downarrow 0$ and $c_n=\E[\int_{0<|x|<\epsilon_n}xN(1,\textrm{d}x)]$. 
Hence, we can think of the sample paths of $D$ as approximated by a process that has finitely many jumps, where the jump sizes are bounded below, and between jumps it is linear, see Figure \ref{fig:subordinatorsamplepaths}. On the other hand, the sample paths of $D$ itself can increase only by jumps. 
This approximation argument comes from the idea of ``interlacing,'' the details of which appear in Section 2.6.2 of \cite{Applebaum}.

Define the \textit{inverse} (or the \textit{first hitting time process}) $E=(E_t)_{t\ge 0}$ of a subordinator $D$ by
\begin{equation}
    E_t=\inf\{s>0: D_s>t\}.
\end{equation}
We refer to $E$ as an \textit{inverse subordinator} for short. With the assumption that $b=0$ and $\nu(0,\infty)=\infty$, $D$ has strictly increasing paths starting at 0, and hence, its inverse $E$ has continuous, non-decreasing paths starting at 0 which are not constant in a neighborhood of $t=0$. 
Indeed, we can think of $E$ as having random long flat periods (corresponding to the large jumps of $D$) and in between these, $E$ is increasing very quickly (since $D$ has infinitely many small jumps). Note that $E$ is not a L\'evy process since it no longer has independent or stationary increments (see \cite{MS-1}). 
On the other hand, $E$ has finite exponential moment; i.e.\ $\mathbb{E}[e^{cE_t}]<\infty$ for all $t$ (see e.g.\  \cite{inversesubordinatorsimulation}).

\begin{figure}
    \centering
    \begin{minipage}{2in}
    \includegraphics[width=2in]{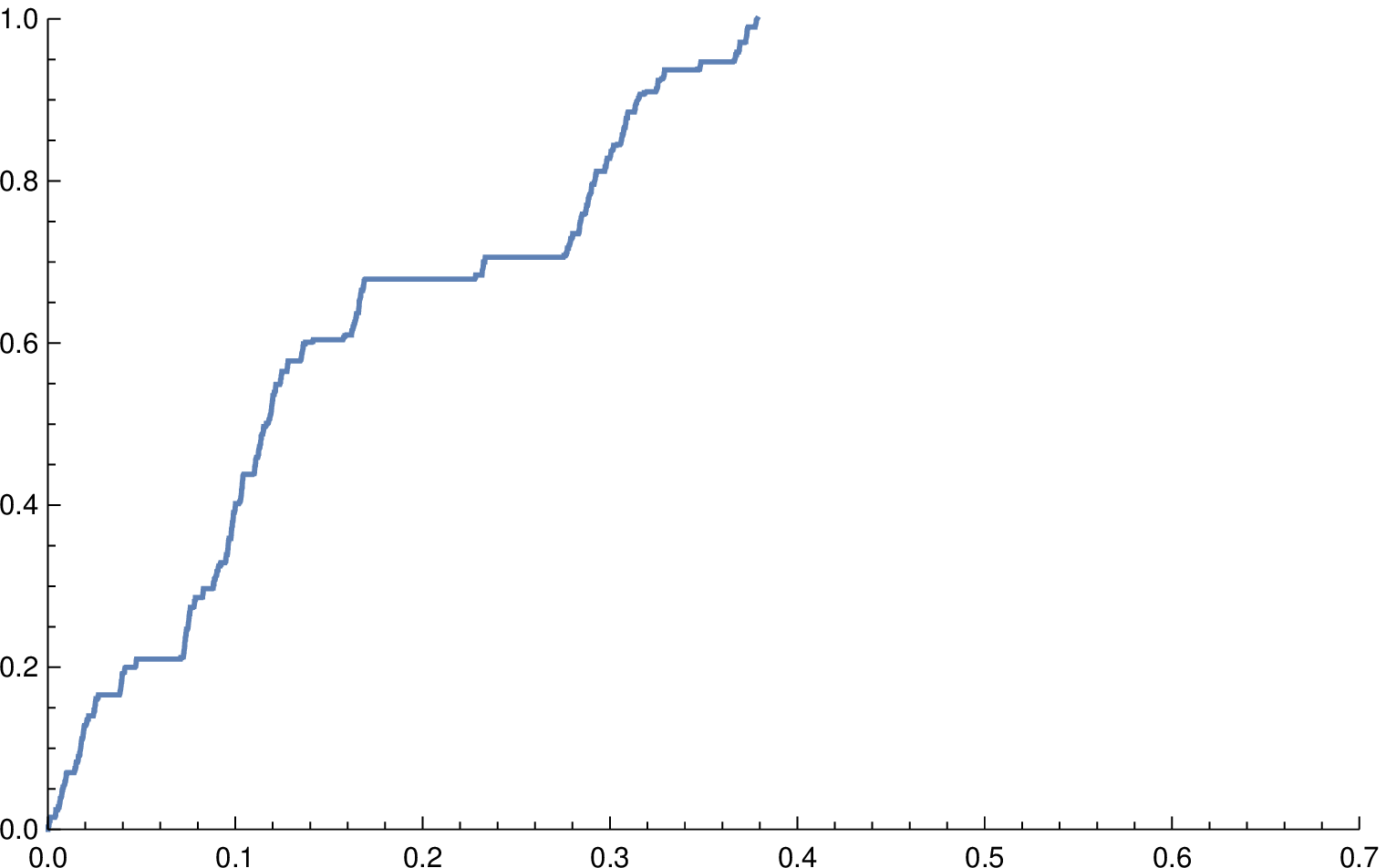}
    \end{minipage}$\qquad$
    \begin{minipage}{2in}
    \includegraphics[width=2in]{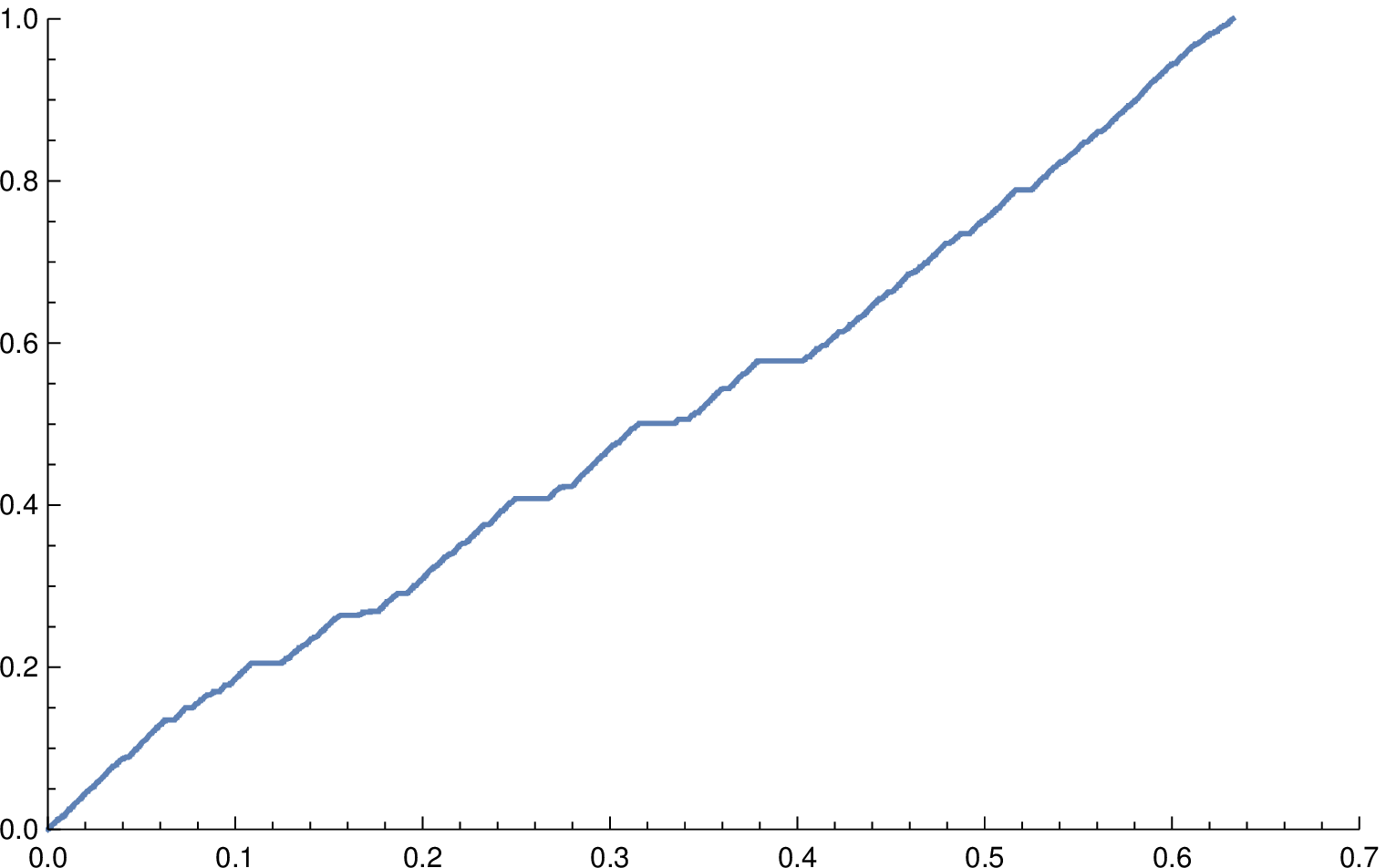}
    \end{minipage}
   
    \vspace{0.2in}
  
    \begin{minipage}{2in}
    \includegraphics[width=2in]{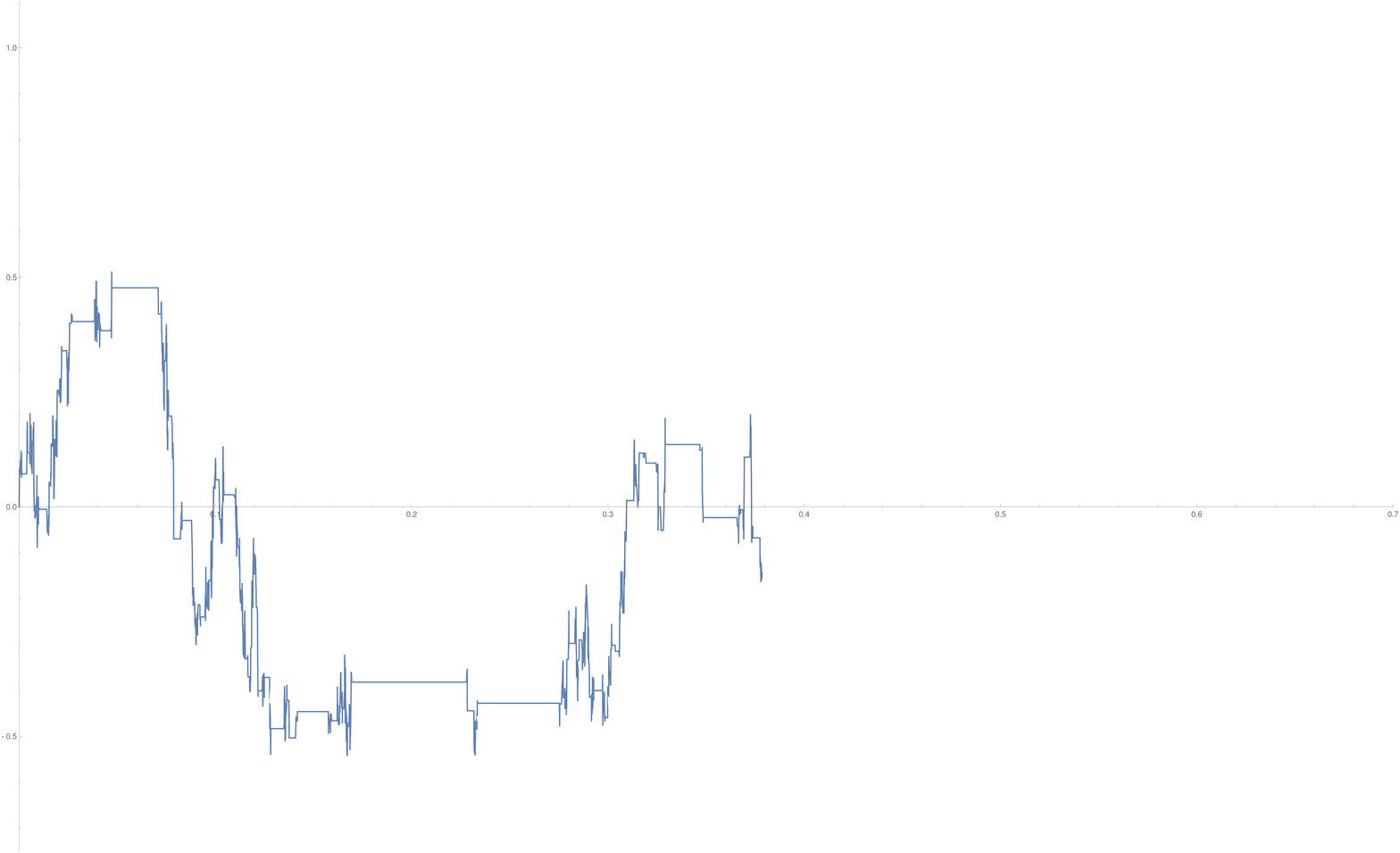}
    \end{minipage}$\qquad$
    \begin{minipage}{2in}
    \includegraphics[width=2in]{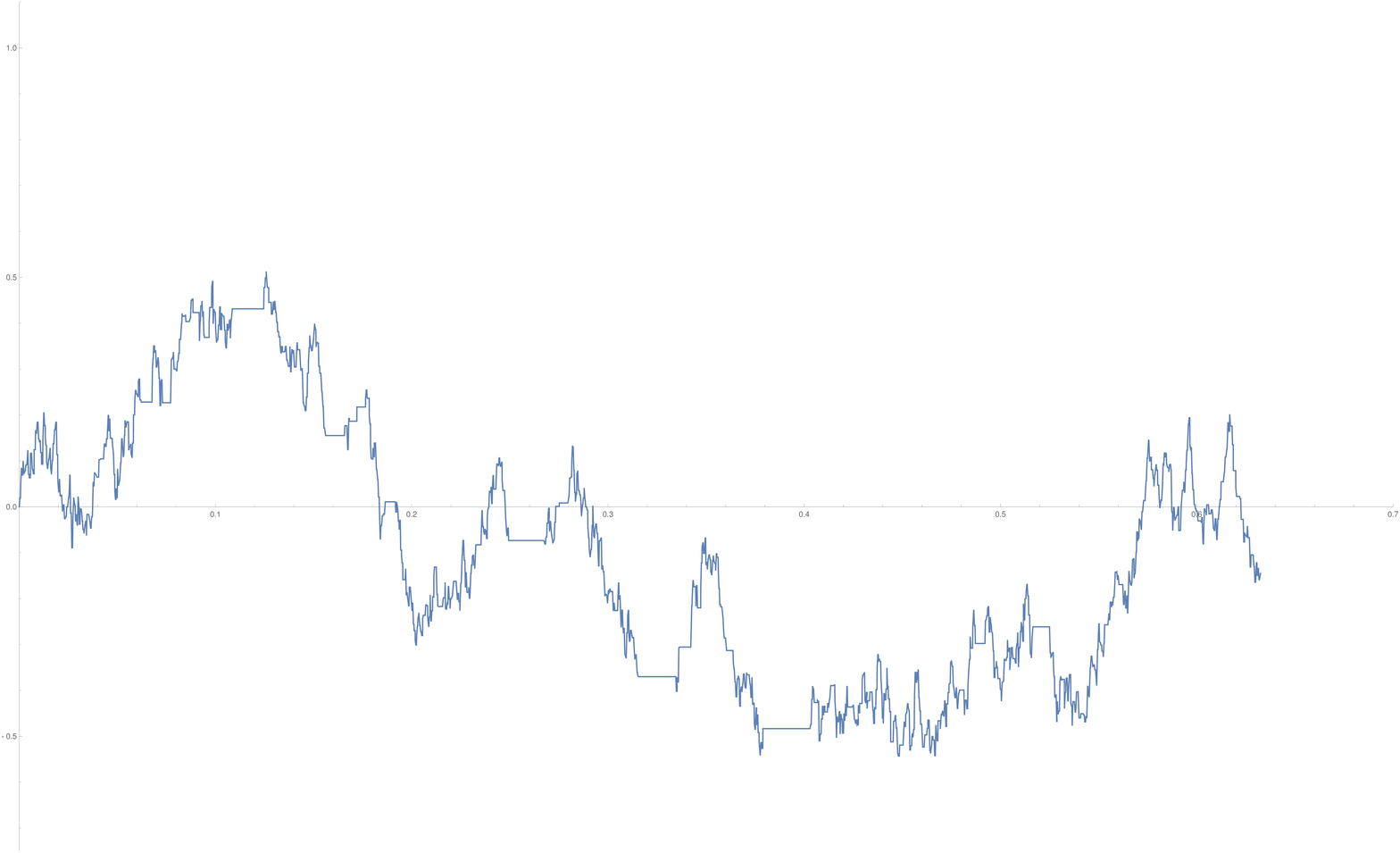}
    \end{minipage}
\caption{Sample paths of an inverse stable subordinator $E_t$ with $\alpha=0.7$ (top left) and $\alpha=0.9$ (top right),
and the corresponding sample paths of the time-changed Brownian motion $B_{E_t}$ with $\alpha=0.7$ (bottom left) and $\alpha=0.9$ (bottom right).}
\label{fig:inversesamplepaths}
\end{figure}

Now, suppose $D$ is an $\alpha$-stable subordinator independent of Brownian motion $B$. Then the self-similarity of $D$ with index $1/\alpha$ implies self-similarity of $E$ with index $\alpha$ (See \cite{MS-1}). 
Figure \ref{fig:inversesamplepaths} presents sample paths of the time change $E$ and the corresponding \textit{time-changed Brownian motion} $B\circ E=(B_{E_t})_{t\ge 0}$. 
The time-changed Brownian motion is non-Markovian and non-Gaussian (\cite{MS-1,MS-2}). Moreover, the densities $p(t,x)$ of $B_{E_t}$ satisfy the time-fractional order heat equation 
\[
	\frac{\partial^\alpha p(t,x)}{\partial t^\alpha}=\frac 12\frac{\partial^2 p(t,x)}{\partial x^2},
\]
 where $\partial^\alpha/\partial t^\alpha$ is the Caputo fractional derivative of order $\alpha$ (see e.g.\ \cite{Gorenflo1997}). The time-changed Brownian motion and stochastic differential equations it drives have been used to model subdiffusions, where particles spread at a slower rate than the usual Brownian particles (see e.g.\ \cite{MeerschaertSikorskii,meerschaert2009,MagdziarzSchilling,Hahn2012,Kobayashi2011} and references therein). 
  For simulations of inverse subordinators and their associated time-changed processes, see the algorithms presented and discussed in e.g.\ \cite{GadjaMagdziarz,inversesubordinatorsimulation}.


\section{Limiting Hull Behavior}\label{scalingarguments}

\subsection{Rescaled Hulls}

This section develops results about rescaled hulls 
for time-changed processes of the form $X\circ E=(X_{E_t})_{t\ge 0}$, where $X=(X_t)_{t\ge 0}$ and $E=(E_t)_{t\ge 0}$ are independent self-similar processes with continuous paths. 
Note that $E$ is not necessarily non-decreasing and may be allowed to take negative values if the process $(X_t)$ is defined for $t\in\mathbb{R}$; for simplicity of discussion, however, we assume that $E$ is nonnegative. 
Important examples of the ``outer process'' $X$ include fractional Brownian motion $B^H$ of Hurst index $H\in(0,1)$, which coincides with Brownian motion when $H=1/2$. When $H\ne 1/2$, $B^H$ is non-Markovian and its increments are positively correlated if $H>1/2$ and negatively correlated if $H<1/2$. 
On the other hand, an inverse $\alpha$-stable subordinator can serve as the ``inner process'' $E$ with self-similarity index $\alpha\in(0,1)$.  Since we are allowed to take $E$ to be the identity map, the results presented in this section cover the cases of (untime-changed) self-similar processes as well. 

Recall that a process $(X_t)$ is said to be self-similar with index $H>0$ if $(X_{ct})\stackrel{\textrm{d}}{=}(c^H X_t)$ for all $c>0$. Note that the self-similarity implies that $X_0=0$ a.s. We begin with a simple lemma.

\begin{lemma}\label{lem:scalingoftimechangedhulls_general}
Let $K_t$ be the hull driven by $\lambda(t)=X_{E_t}$, where $X$ is a continuous, self-similar process of index $H>0$ and $E$ is a nonnegative, continuous, self-similar process of index $\alpha>0$, independent of $X$. 
Then for any $r>0$, the scaled hulls $(\frac{1}{r}K_{r^2t})_{t\geq 0}$ and the hulls driven by $(r^{2H\alpha-1}X_{E_t})_{t\geq 0}$ have the same distribution.
\end{lemma}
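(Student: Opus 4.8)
The plan is to combine the Scaling Property of the Loewner equation with the two self-similarity relations, and then to conclude by using that the correspondence between driving functions and hulls is deterministic and measurable. First I would apply the Scaling Property: since $K_t$ is driven by $\lambda(t)=X_{E_t}$, the scaled hull $\frac{1}{r}K_{r^2 t}$ is driven by
\[
\tilde\lambda(t):=\frac{1}{r}\lambda(r^2 t)=\frac{1}{r}X_{E_{r^2 t}}.
\]
It therefore suffices to show that the process $(\tilde\lambda(t))_{t\ge 0}$ has the same distribution as $(r^{2H\alpha-1}X_{E_t})_{t\ge 0}$: once this is established, the two families of hulls, being deterministic functionals of driving functions with the same law, must agree in distribution.

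Next I would transfer the scaling through the two processes one at a time. Using self-similarity of $E$ with index $\alpha$ applied with $c=r^2$, the inner process satisfies $(E_{r^2 t})_{t\ge 0}\stackrel{\textrm{d}}{=}(r^{2\alpha}E_t)_{t\ge 0}$. Since $X$ is independent of $E$ and composition is a measurable operation, this gives $(X_{E_{r^2 t}})_{t\ge 0}\stackrel{\textrm{d}}{=}(X_{r^{2\alpha}E_t})_{t\ge 0}$. Then I would use self-similarity of $X$ with index $H$ applied with $c=r^{2\alpha}$: writing $Y_s:=X_{r^{2\alpha}s}$, self-similarity gives $Y\stackrel{\textrm{d}}{=}r^{2\alpha H}X$ as processes, and $Y$ is again independent of $E$, so composing with $E$ yields $(X_{r^{2\alpha}E_t})_{t\ge 0}=(Y_{E_t})_{t\ge 0}\stackrel{\textrm{d}}{=}(r^{2\alpha H}X_{E_t})_{t\ge 0}$. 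Chaining these and dividing by $r$ produces
\[
\tilde\lambda(t)=\frac{1}{r}X_{E_{r^2 t}}\stackrel{\textrm{d}}{=}\frac{1}{r}\,r^{2\alpha H}X_{E_t}=r^{2H\alpha-1}X_{E_t},
\]
which is exactly the required driving function.

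The step I expect to be the main obstacle is the rigorous justification that equality in distribution is preserved under composition with an independent outer process, that is, the elementary but slightly delicate fact that if two inner processes have the same law and each is independent of a fixed outer process $X$, then the composed processes have the same law. This holds because independence forces the relevant joint laws to factor as products of matching marginals, while the composition map $(x,e)\mapsto(x_{e_t})_{t\ge 0}$ is measurable; I would state this carefully once and then invoke it at both places above, paying attention to the fact that it is applied for \emph{every} $r>0$ simultaneously at the level of process distributions. The final step, passing from equality in law of driving functions to equality in law of hulls, is immediate from the deterministic measurability of the Loewner correspondence recorded in Section~\ref{background}, so no further argument is needed there.
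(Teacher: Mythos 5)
Your proof is correct and takes essentially the same route as the paper: apply the scaling property of the Loewner equation to identify $\frac{1}{r}K_{r^2t}$ as the hull driven by $\frac{1}{r}X_{E_{r^2t}}$, use that $(X_{E_t})$ is self-similar with index $H\alpha$ to rewrite this driving process in law as $(r^{2H\alpha-1}X_{E_t})$, and conclude via the deterministic driving-function-to-hull correspondence. The only difference is presentational: the paper asserts the composite self-similarity in one sentence from independence and the two self-similarities, whereas you derive it explicitly via two independent-composition steps, which is a correct elaboration rather than a different argument.
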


\begin{proof}
For any fixed $r>0$, due to the scaling of the Loewner equation, the hulls $\frac{1}{r}K_{r^2t}$ are driven by $\frac{1}{r}X_{E_{r^2t}}$.  On the other hand, the self-similarities of $X$ and $E$ together with independence imply that $(X_{E_t})$ is self-similar with index $H\alpha$, so 
\[
	\Bigl(\frac{1}{r}X_{E_{r^2t}}\Bigr)_{t\ge 0}
	\stackrel{\textrm{d}}{=} \Bigl(\frac{1}{r} (r^2)^{H\alpha} X_{E_t}\Bigr)_{t\ge 0}
	=(r^{2H\alpha-1} X_{E_t})_{t\ge 0}.
\]
Therefore, the hulls generated by $(r^{2H\alpha-1} X_{E_t})_{t\ge 0}$ must have the same distribution as the scaled hulls $(\frac{1}{r}K_{r^2t})_{t\ge 0}$. 
\end{proof}

In \cite{chenrohde}, Chen and Rohde consider geometric properties of the Loewner hulls that are generated by a symmetric stable process.   
One of their results (Proposition 3.2 in that paper) shows that rescaling the hulls  leads to deterministic sets 
(and fairly uninteresting sets -- either a vertical line segment or the empty set). 
This is expected because the driving process does not satisfy Brownian scaling, which implies that the Loewner hulls will not satisfy scale-invariance.

The following result, which is analogous to Proposition 3.2 in \cite{chenrohde}, holds for our time-changed process $(X_{E_t})$. 

\begin{prop}\label{thm:rescaledhullslines}
Let $K_t$ be the hull  driven by $X_{E_t}$, where $X$ is a continuous, self-similar process of index $H>0$ and $E$ is a nonnegative, continuous, self-similar process of index $\alpha>0$, independent of $X$. 
\begin{enumerate}
\item[(a)] 
If $2H\alpha<1$, then as $r \to \infty$, the rescaled hulls $\frac{1}{r}K_{r^2}$ converge to the vertical line segment $[0,2i]$ (in the Hausdorff metric) in probability. If $2H\alpha>1$, then the same conclusion holds as $r \to 0$.
\item[(b)] Suppose $\P(X_1=0)=0$. 
If $2H\alpha<1$, for all $\epsilon >0$, 
\[
	\displaystyle \lim_{r \to 0} \P \left( \frac{1}{r} K_{r^2} \cap \{ y > \epsilon \text{ and } |x| < 1/\epsilon \} \neq \emptyset \right) = 0.
\]
 If $2H\alpha>1$, then the same conclusion holds with the limit as $r \to \infty$. 
\end{enumerate}
\end{prop}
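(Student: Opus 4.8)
The plan is to use the scaling Lemma \ref{lem:scalingoftimechangedhulls_general} to trade the random rescaled hull for one whose driving function is a \emph{fixed} (random) path multiplied by a scalar tending to $0$ or $\infty$, and then analyze those two limits pathwise. Writing $\phi(s)=X_{E_s}$ and $\beta=2H\alpha-1$, the lemma says that $\frac1r K_{r^2}$ has the same law as the time-$1$ hull $\wt K_1$ driven by $s\mapsto r^\beta\phi(s)$ on $[0,1]$. Both limiting objects in the statement are deterministic, so convergence in probability is equivalent to convergence in distribution, and it suffices to prove, for almost every fixed path $\phi$, the corresponding \emph{deterministic} convergence of $\wt K_1$ as $c:=r^\beta\to 0$ (part (a): $2H\alpha<1,\ r\to\infty$ or $2H\alpha>1,\ r\to 0$) or as $c\to\infty$ (part (b): the two opposite limits). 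Throughout I use the Loewner flow $g_t(z)=u_t+iv_t$ together with $\dot v_t=-2v_t/((u_t-\lambda(t))^2+v_t^2)$ and $\dot u_t=2(u_t-\lambda(t))/((u_t-\lambda(t))^2+v_t^2)$, and the resulting universal bound $\frac{d}{dt}v_t^2=-4v_t^2/((u_t-\lambda)^2+v_t^2)\in[-4,0]$, which in particular forces $\Im z\le 2$ for every $z\in \wt K_1$ since $\hcap\wt K_1=1$.

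For part (a) the driving function $c\phi$ tends to $0$ uniformly on $[0,1]$, because $M:=\max_{[0,1]}|\phi|<\infty$ by continuity and $c\to 0$. I would then invoke continuity of the Loewner transform at the zero driving function (see, e.g., \cite{lawler}): when $\|\lambda\|_{\infty,[0,1]}\le\delta$ the hull is trapped in a $\delta$-neighborhood of the segment $[0,2i]$ generated by $\lambda\equiv 0$. The inclusion $\wt K_1\subseteq\{|x|\le C\delta,\ 0\le y\le 2\}$ follows from the flow estimates (the imaginary part obeys $v_t^2\ge v_0^2-4t$, pinning $\max\Im\le 2$, while the real part drifts by $O(\delta)$), and conversely $[0,2i]$ lies within $C\delta$ of $\wt K_1$ because $\hcap\wt K_1=1$ keeps the tip near $2i$. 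Taking $\delta=cM\to 0$ yields Hausdorff convergence $\wt K_1\to[0,2i]$, which is the claim.

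Part (b) is the substantive case: now $c\to\infty$, the driving function $c\phi$ blows up, and I must show that every $z=x+iy$ with $y>\epsilon$ and $|x|<1/\epsilon$ survives (i.e.\ $T_z>1$) with probability tending to $1$; all such $z$ lie in $B(0,R_\epsilon)$, $R_\epsilon=\sqrt{\epsilon^{-2}+4}$. The key identity is $v_0^2-v_1^2=4\int_0^1 v_t^2/((u_t-\lambda)^2+v_t^2)\,dt$, and I split $[0,1]$ at a threshold $L_c\to\infty$ chosen so that the ``bad'' set $A_c=\{t:\ c|\phi(t)|\le L_c\}$ still has vanishing measure. On $A_c$ I use only $\frac{d}{dt}v_t^2\ge-4$, so the decrement there is at most $4\,\mathrm{Leb}(A_c)$; the crucial fact is that $A_c\downarrow\{t:\phi(t)=0\}$, which has Lebesgue measure zero almost surely, since by self-similarity of $X$ and independence of $E$ one has $\P(X_{E_t}=0)=\P(X_1=0)=0$ for each $t>0$, whence Fubini gives $\E\,\mathrm{Leb}\{t:X_{E_t}=0\}=0$ and monotone convergence gives $\mathrm{Leb}(A_c)\to 0$. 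Off $A_c$ the driving function is large, so once $u_t$ is controlled the integrand is $\le v_t^2/(L_c-\sup_t|u_t|)^2\to 0$; combining the two pieces gives $v_1^2\ge\epsilon^2-o(1)>0$ for every such $z$, so the window is avoided with probability $\to 1$.

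The main obstacle is the real-part drift in part (b): to know that $|u_t-\lambda(t)|$ is genuinely large off $A_c$ I must rule out $u_t$ chasing the enormous driving function, which calls for an a priori bound on $\sup_t|u_t|$ and hence a bootstrap/continuity argument run simultaneously with the lower bound on $v_t$, carried out uniformly over all $z$ in the window. A secondary technical point is the passage from $\P(X_1=0)=0$ to $\P(X_{E_t}=0)=0$, which uses self-similarity ($X_u\stackrel{\textrm{d}}{=}u^H X_1$) upon conditioning on $E_t=u$; this is exactly where the hypothesis $\P(X_1=0)=0$ in part (b) is used, and it is what prevents the driving function from vanishing on a set of positive measure and thereby permitting unwanted vertical growth in the central window.
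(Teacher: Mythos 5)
Your part (a) is essentially the paper's own argument: Lemma \ref{lem:scalingoftimechangedhulls_general} reduces to the time-one hull driven by $r^{2H\alpha-1}\lambda$, uniform smallness of the driving function traps that hull in a thin vertical strip (the paper quotes Lemma \ref{lem:lemma33}(a) where you sketch flow estimates, a cosmetic difference), and the fixed half-plane capacity forces Hausdorff convergence to $[0,2i]$; the paper is exactly as terse as you are on that last step. Your preliminary reduction — convergence in probability to a deterministic limit follows from almost-sure pathwise convergence of the equidistributed hulls $\wt K_1(c)$ — is also sound.

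Part (b) is where you genuinely diverge, and where the one real gap sits. The paper argues by contraposition through Lemma \ref{lem:lemma33}(b): if the rescaled hull meets $\{y>\epsilon,\ |x|<1/\epsilon\}$, then the driving function must spend time at least $\epsilon^2/16$ in a bounded interval, i.e.\ $\int_0^1 \1_{\{|\lambda(t)|\le\delta/\epsilon\}}\,\mathrm{d}t>\epsilon^2/16$ with $\delta=10r^{1-2H\alpha}\to 0$, and then Markov's inequality plus $\P(X_{E_t}=0)=0$ finishes (that probabilistic ingredient, self-similarity plus independence, is used by you in exactly the same way, via Fubini and continuity from above of Lebesgue measure — that part of your argument is correct). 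You instead try to re-derive the geometric content from the Loewner ODE, splitting $[0,1]$ into the bad set $A_c$ of small measure and the good set where $|c\phi|$ is enormous. But on the good set you need $|u_t-c\phi(t)|$ to be large, and you flag the required control of $u_t$ as ``the main obstacle'' calling for a bootstrap — without carrying it out. That unexecuted bootstrap is precisely the content that Lemma \ref{lem:lemma33}(b) packages for the paper; it cannot simply be asserted. It can, however, be closed with one standard estimate: by AM--GM, $|\dot u_t|=2|u_t-\lambda(t)|/((u_t-\lambda(t))^2+v_t^2)\le 1/v_t$, so as long as $v_t\ge\epsilon/2$ one has $|u_t|\le 1/\epsilon+2t/\epsilon\le 3/\epsilon$; hence off $A_c$ the decrement rate of $v_t^2$ is at most $4R_\epsilon^2/(L_c-3/\epsilon)^2$, and the total drop of $v_t^2$ over $[0,1]$ is at most $4R_\epsilon^2/(L_c-3/\epsilon)^2+4\,\mathrm{Leb}(A_c)\to 0$; a first-failure-time argument then shows $v_t>\epsilon/2$ on all of $[0,1]$ once $c$ is large, uniformly over the window. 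With that supplied, your proof is complete and is self-contained where the paper leans on the imported Chen--Rohde lemma; the trade-off is that your route needs these flow estimates done carefully, whereas the paper's is a few lines.
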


\begin{remark}
\begin{em}
In part (b), we need the constraint $|x|<1/\epsilon$ on the real part, which can be interpreted as follows. If $2H\alpha<1$ and $r$ is very small, then $r^{2H\alpha-1} \lambda(t)$ has a very large scaling factor. Near zero, we would not expect to see much hull growth, but far away from zero, we will see the taller parts of the hull that are grown when the driving function is constant. 
\end{em}
\end{remark}

The proof utilizes the next lemma, which is entirely deterministic:

\begin{lemma}[Lemma 3.3 in \cite{chenrohde}]\label{lem:lemma33}
(a) If $\lambda(t)\in[a,b]$ for all $t\in[0,T]$, then $K_T\subset [a,b]\times\mathbb{R}$.

(b) Let $0<\epsilon<1$. If $I\subset\mathbb{R}$ is an interval of length $\sqrt{T}$ and $10I$ the concentric interval of size $10\sqrt{T}$, and if
\begin{equation}
    \int_{0}^{T}\mathbf{1}_{\{\lambda(t)\in 10I\}}\mathrm{d}t\leq\epsilon T,
\end{equation}
then
\begin{equation}
    K_T\cap I\times[4\sqrt{\epsilon T},\infty)=\emptyset.
\end{equation}
\end{lemma}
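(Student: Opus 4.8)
The plan is to prove both parts by analyzing the Loewner flow $g_t(z)$ coordinatewise. Writing $g_t(z)=u(t)+iv(t)$, the equation \eqref{eqn:le} becomes the real system
\[
\dot u=\frac{2(u-\lambda)}{(u-\lambda)^2+v^2},\qquad \dot v=\frac{-2v}{(u-\lambda)^2+v^2}.
\]
Recall that $z\in K_T$ precisely when the flow reaches the singularity by time $T$, which forces $v(t)\to 0$; hence to show $z\notin K_T$ it suffices to show the flow exists on $[0,T]$ with $v(T)>0$. Since the right-hand side stays bounded as long as $v$ is bounded below and $u$ is bounded, verifying those two bounds on $[0,T]$ simultaneously rules out blow-up and gives $v(T)>0$.

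For part (a) I would use a one-sided barrier on the real part. If $\Re z=x>b$, set $X(t)=u(t)-b$, so $X(0)>0$. Because $\lambda(t)\le b$, the numerator $u-\lambda=X+(b-\lambda)>0$ whenever $X>0$, hence $\dot X>0$ whenever $X>0$; so $X$ can never return to $0$ and $u(t)>b$ for all $t$. Then $g_t(z)\neq\lambda(t)$, so $z\notin K_T$. The symmetric argument using $\lambda\ge a$ handles $x<a$, yielding $K_T\subset[a,b]\times\R$.

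Part (b) is the substantive one. Take $z=x+iy$ with $x\in I$ and $y\ge 4\sqrt{\epsilon T}$, and let $c$ be the center of $I$, so $I=[c-\tfrac12\sqrt T,\,c+\tfrac12\sqrt T]$ and $10I=[c-5\sqrt T,\,c+5\sqrt T]$. The engine is the identity
\[
y^2-v(T)^2=\int_0^T\frac{4v(t)^2}{(u-\lambda)^2+v^2}\,dt,
\]
coming from $\tfrac{d}{dt}v^2=-4v^2/((u-\lambda)^2+v^2)$, which I would split according to whether $\lambda(t)\in 10I$. On the ``bad'' set $\{\lambda\in 10I\}$ the integrand is at most $4$, and by hypothesis this set has measure at most $\epsilon T$, contributing at most $4\epsilon T$. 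On the ``good'' set $\{\lambda\notin 10I\}$, provided $|u(t)-c|\le 2\sqrt T$, one has $|u-\lambda|\ge 5\sqrt T-2\sqrt T=3\sqrt T$, so the integrand is at most $4v^2/(9T)\le 4y^2/(9T)$, contributing at most $\tfrac49 y^2$. With $y^2=16\epsilon T$ this makes the total decrease at most $(4+\tfrac{64}{9})\epsilon T<16\epsilon T=y^2$, so $v(T)>0$ with room to spare; in fact one gets $v(T)\ge 2.2\sqrt{\epsilon T}$, and the estimate only improves for larger $y$.

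The one gap is the standing assumption $|u(t)-c|\le 2\sqrt T$, which controls the separation on the good set; and bounding the motion of $u$ in turn needs a lower bound on $v$, since on the bad set $|\dot u|\le 1/v$. This mutual dependence between the $u$- and $v$-bounds is the main obstacle, and I would close it with a bootstrap/continuity argument. Let $\tau$ be the first time that either $v$ falls to $2\sqrt{\epsilon T}$ or $|u-c|$ reaches $2\sqrt T$. On $[0,\tau]$ both standing hypotheses hold, so (i) the decrease estimate above gives $v(\tau)\ge 2.2\sqrt{\epsilon T}>2\sqrt{\epsilon T}$, and (ii) splitting $\int_0^\tau|\dot u|\,dt$ good/bad — using $v\ge 2\sqrt{\epsilon T}$ on the bad set of measure $\le\epsilon T$ and the separation $|u-\lambda|\ge 3\sqrt T$ on the good set — gives a total displacement $\le \tfrac12\sqrt T+\tfrac23\sqrt T$, whence $|u(\tau)-c|\le\tfrac12\sqrt T+\tfrac12\sqrt T+\tfrac23\sqrt T<2\sqrt T$. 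Both conclusions are strict, so $\tau$ cannot be reached before $T$; the flow therefore survives to $T$ with $v(T)>0$, proving $z\notin K_T$ and hence $K_T\cap\bigl(I\times[4\sqrt{\epsilon T},\infty)\bigr)=\emptyset$. The numerical factor $4$ in the statement is exactly what is needed to make these inequalities close.
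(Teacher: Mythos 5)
The paper does not actually prove this lemma---it quotes it verbatim from Chen--Rohde \cite{chenrohde}---and your argument is essentially the one given in that source: the same coordinatewise analysis of the Loewner flow, the same splitting of the integral for $\frac{d}{dt}v^2$ into times when $\lambda\in 10I$ versus $\lambda\notin 10I$, and the same stopping-time bootstrap that couples the lower bound on $\operatorname{Im}g_t$ with the drift bound on $\operatorname{Re}g_t$. Your constants check out ($v(T)^2\ge \tfrac{44}{9}\,\epsilon T$ and $|u(T)-c|\le \tfrac{5}{3}\sqrt{T}<2\sqrt{T}$), and your real-part control is exactly the extra information from the Chen--Rohde proof that the paper invokes in Lemma \ref{fromCRlem} to get $\operatorname{Re}(g_1(z))\le 3.5$.
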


\begin{proof}[Proof of Proposition \ref{thm:rescaledhullslines}] 
To prove (a), note that due to Lemma \ref{lem:scalingoftimechangedhulls_general}, the scaled hull $\frac{1}{r}K_{r^2}$ is equal in distribution to the time $t=1$ hull generated by 
$\lambda_r(t) = r^{2H\alpha -1}\lambda(t)$, where $\lambda(t)=X_{E_t}$. 
Since $(X_{E_t})$ has continuous paths, we have $\sup_{0 \leq t \leq 1} |\lambda(t)|<\infty$ a.s.
Therefore, if $2H\alpha<1$, for any $\epsilon>0$, 
\[
\lim_{r\to\infty}\P\left( \exists \,t \in [0,1] \text{ such that } |\lambda_r(t)| > \epsilon \right)
=\lim_{r\to\infty}\P \left( \sup_{0 \leq t \leq 1} |\lambda(t)| > r^{1-2H\alpha}\epsilon \right)
=0.
\]
Hence
\[
 \lim_{r\to\infty}\P\left( \l_r[0,1] \subset [-\epsilon, \epsilon] \right) = 1.
 \]
By part (a) of Lemma \ref{lem:lemma33}, this implies that the width of the time 1 hull of $\lambda_r(t)$ is going to 0 in probability, so the same is true for $\frac{1}{r}K_{r^2}$.
Since the halfplane capacity of $\frac{1}{r}K_{r^2}$ is 1, we know that it must be converging to the interval $[0,2i]$ with respect to Hausdorff distance (see the example with $\lambda(t)\equiv c$ in Section \ref{background}). The case when $2H\alpha>1$ is proved in an analogous manner. 

To prove (b), let $\epsilon\in (0,2)$. 
Once again, by Lemma \ref{lem:scalingoftimechangedhulls_general}, the scaled hull $\frac{1}{r}K_{r^2}$ is equal in distribution to the time $t=1$ hull generated by 
$r^{2H\alpha -1}\lambda(t)$ (call this hull $\widetilde{K}_1$). So
\[
\P \left( \frac{1}{r} K_{r^2} \cap \{ y > \epsilon \text{ and } |x| < 1/\epsilon \} \neq \emptyset \right)
=\P \left( \widetilde{K}_1 \cap \{ y > \epsilon \text{ and } |x| < 1/\epsilon \} \neq \emptyset \right).
\]
Now, if $\widetilde{K}_1 \cap \{ y > \epsilon \text{ and } |x| < 1/\epsilon \} \neq \emptyset$, then there exists a (random) interval $I$ of length 1 (not necessarily centered at 0) such that $I\subset(-1/{\epsilon},1/{\epsilon})$ and $\widetilde{K}_1 \cap I\times (\epsilon,\infty) \neq \emptyset$. 
By Lemma \ref{lem:lemma33}(b) with $T=1$ and $\epsilon$ replaced by $4\sqrt{\epsilon}$, this implies $\int_0^1\1_{\{r^{2H\alpha-1}\lambda(t)\in 10I\}}\mathrm{d}t>\epsilon^2/16$. 
 Since 
$\{ r^{2H\alpha-1}\lambda(t)\in 10I \} \subset \{ |\lambda(t)|\leq\delta/\epsilon \}$
 with $\delta=10r^{1-2H\alpha}$, it follows that 
\begin{align}
\P \left( \frac{1}{r} K_{r^2} \cap \{ y > \epsilon \text{ and } |x| < 1/\epsilon \} \neq \emptyset \right)
&\leq\P\left(\int_{0}^{1}\1_{\{|\lambda(t)|\leq\delta/\epsilon\}}\mathrm{d}t>\frac{\epsilon^2}{16}\right)\\
&\le\frac{16}{\epsilon^2}\int_0^1 \P\left(|\lambda(t)|\le \frac{\delta}{\epsilon}\right)\,\mathrm{d}t,
\end{align}
using Markov's inequality.
Since $\P(X_1=0)=0$ and the process $X$ is self-similar by assumption,
$\P(X_t=0)=0$ for each $t>0$. Moreover, since $X$ and $E$ are independent, $\P(|\lambda(t)|=0)=\P(X_{E_t}=0)=\E[\P(X_{E_t}=0|E_t)]=0$ for each $t>0$. Therefore, the above bound tends to 0 upon taking the limit as $\delta\to0$ (so as $r\to 0$ if $2H\alpha<1$ or as $r\to\infty$ if $2H\alpha>1$), which completes the proof. 
\end{proof}

Proposition \ref{thm:rescaledhullslines} immediately yields the following corollary:

\begin{cor}\label{cor:rescaledhullslines}
Let $B$ be a Brownian motion independent of an inverse $\alpha$-stable subordinator $E$. Let $K_t$ be the hull driven by the time-changed Brownian motion $B_{E_t}$.
\begin{enumerate}
\item[(a)]
As $r \to \infty$, the rescaled hulls $\frac{1}{r}K_{r^2}$ converge to the vertical line segment $[0,2i]$ (in the Hausdorff metric) in probability. 
\item[(b)] For all $\epsilon >0$, 
\[
	\displaystyle \lim_{r \to 0} \P \left( \frac{1}{r} K_{r^2} \cap \{ y > \epsilon \text{ and } |x| < 1/\epsilon \} \neq \emptyset \right) = 0.
\]
\end{enumerate}
\end{cor}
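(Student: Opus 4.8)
The plan is to deduce the corollary as a direct specialization of Proposition \ref{thm:rescaledhullslines}, so that the entire task reduces to checking that the time-changed Brownian motion $B_{E_t}$ fits the framework of that proposition and then reading off which of the two regimes ($2H\alpha<1$ or $2H\alpha>1$) applies. First I would identify the two ingredient processes: the outer process is $X=B$, which is self-similar of index $H=1/2$ by Brownian scaling, and the inner process is the inverse $\alpha$-stable subordinator $E$, which is self-similar of index $\alpha$. The latter fact is recorded in Section \ref{background} (it follows from the self-similarity of the $\alpha$-stable subordinator $D$ with index $1/\alpha$; see \cite{MS-1}). By hypothesis $B$ and $E$ are independent, and $E$ is nonnegative and continuous, so all the structural assumptions of Proposition \ref{thm:rescaledhullslines} are met.

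Next I would compute the crucial exponent $2H\alpha = 2\cdot\tfrac12\cdot\alpha = \alpha$. Since $\alpha\in(0,1)$, we are always in the regime $2H\alpha<1$. Part (a) of the corollary is then precisely the $2H\alpha<1$ conclusion of Proposition \ref{thm:rescaledhullslines}(a), namely the convergence of $\tfrac1r K_{r^2}$ to the segment $[0,2i]$ in the Hausdorff metric as $r\to\infty$, and nothing further is required.

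For part (b) I would verify the one remaining non-degeneracy hypothesis, $\P(X_1=0)=0$: here $X_1=B_1$ is a standard normal random variable, whose distribution has no atoms, so $\P(B_1=0)=0$. With $2H\alpha=\alpha<1$, Proposition \ref{thm:rescaledhullslines}(b) then yields exactly the stated vanishing of the probability as $r\to0$. The only point requiring any care — and the closest thing to an obstacle — is invoking the correct self-similarity index $\alpha$ for the inverse subordinator $E$ (rather than the index $1/\alpha$ of the subordinator $D$ itself); once that is in hand, everything reduces to a mechanical substitution into the already-proved proposition.
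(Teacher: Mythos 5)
Your proof is correct and follows exactly the paper's route: the paper states that Proposition \ref{thm:rescaledhullslines} ``immediately yields'' the corollary, and your verification of the hypotheses ($H=1/2$ for Brownian scaling, index $\alpha$ for the inverse subordinator, independence, and $\P(B_1=0)=0$) together with the computation $2H\alpha=\alpha<1$ is precisely the specialization the authors intend.
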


\subsection{Tangential Hulls}

Proposition \ref{thm:rescaledhullslines}(b), in the case that $2H\alpha<1$, tells us that the initial hull growth stays close to the real line.  In some cases, we can describe tangential hull behavior more concretely. 
In particular, 
the following deterministic result tells us that if the driving function is moving faster than a square root function at $t=0$, then the Loewner hull leaves the real line tangentially. 
The result will be used in Section \ref{deterministicexample}.

\begin{thm:tangentialdeparture}
Suppose that $\l(0)=0$ and $\l(t) \geq a t^r$  where $a>0$ and $r \in (0, 1/2]$.  
Then for $t$ small enough, the  hull $K_t$ driven by $\l$ is contained in the region 
$\{ x+iy \, : \, 0\leq x, \, 0<y< \frac{26}{a} \,x^{2-2r} \}$.
\end{thm:tangentialdeparture}

The proof of this proposition will follow from scaling and the next lemma.
 
\begin{lemma}\label{fromCRlem}
Let  $k > 0$.
Suppose $\l$  is defined on $[0,T]$ for $T \geq 1$ and satisfies that 
$\lambda(0)=0$, $\lambda(t) \geq k \sqrt{t}$ for $t \in [0,1]$, and $\l(t) > 3.5$ for $t\geq1$.
Let $K_t$ be driven by $\l$.
Then $\displaystyle K_t \cap \left( [1,2] \times [26/k, \infty) \right) = \emptyset$ for all $t$.
\end{lemma}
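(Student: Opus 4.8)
The plan is to prove the stronger statement $K_T\cap R=\emptyset$ for $R:=[1,2]\times[26/k,\infty)$; since the hulls increase ($K_t\subseteq K_T$ for $t\le T$), this immediately gives the conclusion for every $t$. I would split the driving function at time $1$ via the Concatenation Property, $K_T=K_1\cup g_1^{-1}(\widehat K)$, where $\widehat K$ is the hull driven by $\lambda|_{[1,T]}$, and control the two pieces separately.

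For $K_1$ I would combine two facts. First, every hull of half-plane capacity $1$ has imaginary part at most $2$ (the vertical slit $[0,2i]$ being extremal for the height), so whenever $k\le 13$, i.e. $26/k\ge 2$, the region $R$ lies entirely above $K_1$ and $K_1\cap R=\emptyset$ for free. Second, when $k>6.5$ I would apply Lemma \ref{lem:lemma33}(b) with $I=[1,2]$ and final time $1$, so that $10I=[-3.5,6.5]$; since $\lambda\ge k\sqrt t\ge 0$ on $[0,1]$, one has $\{t:\lambda(t)\in 10I\}\subseteq\{t:k\sqrt t\le 6.5\}$ and hence $\int_0^1\mathbf{1}_{\{\lambda(t)\in 10I\}}\,dt\le (6.5/k)^2<1$. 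Taking $\epsilon=(6.5/k)^2$ yields the forbidden height $4\sqrt{\epsilon}=26/k$, so again $K_1\cap R=\emptyset$. The two ranges $k\le 13$ and $k>6.5$ overlap and cover all $k>0$.

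It remains to show $g_1^{-1}(\widehat K)\cap R=\emptyset$. Because $\lambda>3.5$ on $[1,T]$, Lemma \ref{lem:lemma33}(a) gives $\widehat K\subseteq\{\Re w\ge 3.5\}$, so it suffices to prove that $\Re g_1(z)<3.5$ for every $z\in R$: then $g_1(z)\notin\widehat K$ and $z\notin g_1^{-1}(\widehat K)$. The first step is what makes this feasible, since $R\cap K_1=\emptyset$ means the forward flow $g_s(z)=u_s+iv_s$ exists with $v_s>0$ on all of $[0,1]$ (no point of $R$ is swallowed). Writing out the Loewner equation, $\partial_s u_s=\tfrac{2(u_s-\lambda)}{(u_s-\lambda)^2+v_s^2}$, and the elementary inequality $(u_s-\lambda)^2+v_s^2\ge\max\{2|u_s-\lambda|v_s,\,(u_s-\lambda)^2\}$ gives the two displacement-rate bounds $|\partial_s u_s|\le v_s^{-1}$ and $|\partial_s u_s|\le 2|u_s-\lambda(s)|^{-1}$. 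Integrating and using $\Re z\le 2$, the goal reduces to showing $\int_0^1\min\{v_s^{-1},\,2|u_s-\lambda(s)|^{-1}\}\,ds<3.5-\Re z$, for which $1.5$ suffices.

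This last estimate is the main obstacle. When $k$ is small the points of $R$ sit high ($y=\Im z\ge 26/k$ is large), and $v_s\ge\sqrt{y^2-4s}$ makes the first bound tiny; when $k$ is large the points sit close to the real axis, but then $\lambda$ races to the right, keeping $|u_s-\lambda(s)|$ large and making the second bound tiny. The delicate part is the intermediate regime, where one must play the two rate bounds against each other---using the height control from the first step to keep $v_s$ bounded below on the time interval where $\lambda$ is still near $[1,2]$---so that the total horizontal displacement stays below the available gap $3.5-\Re z$ uniformly in $k$. This is precisely where the constants $3.5$ and $26$ are calibrated.
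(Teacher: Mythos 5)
Your overall architecture is the same as the paper's: split at time $1$ via concatenation, kill the $K_1$ part by Lemma \ref{lem:lemma33}(b) with $I=[1,2]$, $10I=[-3.5,6.5]$, $\epsilon=(6.5/k)^2$ (plus the trivial height bound $\Im K_1\le 2$ for small $k$), and then reduce the later times to the claim that $\Re g_1(z)$ stays below $3.5$ for $z\in R:=[1,2]\times[26/k,\infty)$, so that Lemma \ref{lem:lemma33}(a) applied to $\lambda|_{[1,T]}$ keeps the image hull away from $g_1(z)$. All of that is correct and matches the paper. The genuine gap is that you never prove the crucial bound $\Re g_1(z)<3.5$: you reduce it to $\int_0^1\min\{v_s^{-1},\,2|u_s-\lambda(s)|^{-1}\}\,ds<1.5$ and then declare the estimate ``the main obstacle,'' describing only qualitatively that the two rate bounds must be played against each other. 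As submitted, the central analytic step of the lemma is missing. The paper does not perform this computation either, but it does supply a justification: it observes that the \emph{proof} of Chen--Rohde's Lemma \ref{lem:lemma33}(b) already delivers $\Re(g_1(z))\le 3.5$ in the case $k>6.5$ (the real part of the flow stays near $I$), and notes the remaining case can be checked directly; your blind write-up cannot lean on that citation, so the burden of the estimate falls on you and is not discharged.

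For what it is worth, your reduction can be closed with elementary means, so the approach is salvageable. Since $\lambda(s)\ge k\sqrt{s}$, the set $\{s:\lambda(s)\in[-3.5,6.5]\}$ is contained in $[0,(6.5/k)^2]$; on that interval $v_s^2\ge y^2-4s\ge(26/k)^2-4(6.5/k)^2=507/k^2$, so that portion of the integral is at most $(6.5/k)^2\cdot k/\sqrt{507}<0.29$ when $k>6.5$. Off that set one has $\lambda(s)>6.5$, so as long as $u_s<3.5$ the gap $|u_s-\lambda(s)|>3$ gives rate at most $2/3$; a continuity bootstrap on $\sigma=\inf\{s:u_s\ge 3.5\}$ then bounds the total displacement by $0.29+2/3<1$, forcing $u_s<3<3.5$ on $[0,1]$, i.e.\ $\sigma>1$. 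In the complementary case $k\le 6.5$ one has $y\ge 4$, hence $v_s\ge\sqrt{y^2-4s}\ge\sqrt{12}$ and the integral is below $0.3$. Some explicit argument of this kind (or the paper's appeal to the interior of Chen--Rohde's proof) is required; without it, your remark that the constants $3.5$ and $26$ are ``calibrated'' for the estimate is an assertion, not a proof.
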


\begin{proof}
First assume that $k > 6.5$.  Then for 
 $t \in [0,1]$, the amount of time that $\l$ spends  in $[-3.5, 6.5]$ is at most  $(6.5/k)^2 < 1$.  
Therefore, applying Lemma \ref{lem:lemma33}(b) with $\epsilon = (6.5/k)^2$, 
we conclude that $K_1$ does not intersect $[1,2] \times [26/k, \infty)$.
If $0< k \leq 6.5$, it is trivially true that $K_1 \cap  \left( [1,2] \times [26/k, \infty) \right) = \emptyset $, since the maximum height of $K_1$ is 2.
Let $z \in [1,2] \times [26/k, \infty)$.
The proof of Lemma \ref{lem:lemma33}(b) further gives that 
Re$(g_1(z)) \leq 3.5$
for the $k>6.5$ case, and one can check that this still holds in the $0< k \leq 6.5$ case, too.
Note that for $t \geq 1$, $\l(t) >3.5.$
Thus applying Lemma  \ref{lem:lemma33}(a) to $\l$ on $[1,\infty)$,
we see that $g_1(z)$ cannot be part of the hull $g_1(K_t\setminus K_1)$.
Thus $z$ is not in the hull $K_t$ for any $t$.
\end{proof}

\begin{proof}[Proof of Proposition \ref{tangentialdeparture}]
Let $\l_n(t)  := 2^n\l(2^{-2n}t)$ generate the hull $K_t^n$, 
and note that $\l_n(t) \geq a 2^{n(1-2r)}  t^r.$
Choose $N$ so that $a 2^{N(1-2r)}  \geq 3.5$ and $\l_N$ is defined on $[0,1]$. 
For $n \geq N$,  Lemma \ref{fromCRlem} applied to $\l_{n}(t)$ implies
$K_t^{n}$ does not intersect $[1,2] \times [(26/a) \cdot 2^{-n(1-2r)}, \infty)$.
Thus by scaling,  $K_t$ does not intersect $[2^{-n},2^{-n+1}] \times [(26/a) \cdot 2^{-n(2-2r)}, \infty)$ for $n \geq N$. 
Therefore for small $t$, the hull lies below the curve $y=(26/a) \,x^{2-2r}$.
\end{proof}


\section{Hulls Generated with Time Change}\label{notsimple}

In this section, we take a look at the geometric behavior of a hull that has been generated by a time-changed driving function, where the random time change is given by an inverse subordinator.  In particular, we will consider two cases for the driving function, a time-changed deterministic function and a time-changed Brownian motion; and we ask whether or not the generated hulls are simple curves.
Theorem \ref{thm:timechangesqrtnotsimple} gives conditions for determining simpleness or non-simpleness.  This result is applied to the time-changed Brownian case to show that the generated hulls are almost surely non-simple curves.  We end by discussing some examples of time-changed deterministic functions.


\subsection{Criteria for Simple and Non-simple Hulls}

The results to be presented in this section are applicable to a large class of random time changes, including the inverses of stable and tempered stable subordinators. To discuss them, we first introduce the notion of regular variation. A function $\ell:(0,\infty)\to (0,\infty)$ is said to be \textit{slowly varying at $\infty$} if for any $c>0$,  
\[
	\lim_{u\to \infty} \frac{\ell(cu)}{\ell(u)}=1. 
\]
Examples of slowly varying functions include $\ell(u)=(\log u)^\eta$ for any $\eta \in \mathbb{R}$. 
A function $f:(0,\infty)\to (0,\infty)$ is said to be \textit{regularly varying at $\infty$ with index $\alpha>0$} if for any $c>0$, 
\[
	\lim_{u\to \infty} \frac{f(cu)}{f(u)}=c^\alpha. 
\]
Every regularly varying function $f$ with index $\alpha>0$ is represented as $f(u)=u^\alpha \ell(u)$ with $\ell$ being a slowly varying function. For a general account of this topic, consult \cite{regularvariation}. 

Recall that the Laplace transform of a subordinator $D=(D_t)_{t\ge 0}$ with L\'evy measure $\nu$ and zero drift is given by
\[
	\mathbb{E}[e^{-uD_t}]=e^{-t\psi(u)}, \ \ \textrm{where} \ \ 
	\psi(u)=\int_0^\infty (1-e^{-ux})\nu(\textrm{d}x). 
\]
As usual, we assume that sample paths of $D_t$ start at 0 and are right-continuous with left limits and that $\nu(0,\infty)=\infty$ (so that the inverse $E_t$ is continuous). 
In this section, we further assume that the Laplace exponent $\psi$ is regularly varying at $\infty$ with index $\alpha\in(0,1)$. This includes the two important examples of a subordinator: 
\begin{itemize}
\item a stable subordinator with index $\alpha\in(0,1)$, where $\psi(u)=u^\alpha$, and  
\item a tempered stable subordinator with index $\alpha\in(0,1)$ and tempering factor $\theta>0$, where $\psi(u)=(u+\theta)^\alpha-\theta^\alpha$.
\end{itemize}

We now state the main theorem of this section.  Note that the process $X$ in this theorem can be deterministic.

\begin{thm}\label{thm:timechangesqrtnotsimple}
Let $E$ be the inverse of a subordinator $D$ whose L\'evy measure is infinite and Laplace exponent $\psi$ is regularly varying at $\infty$ with index $\alpha\in(0,1)$. 
Let $X$ be a stochastic process with continuous paths. 
\begin{enumerate}
\item[(a)] If $X$ is a.s.\ locally $\beta$-H\"older with $\beta > \frac{1}{2\alpha}$, 
		then a.s.\ $\lambda(t) = X_{E_t}$ generates a simple curve.
\item[(b)] If there exist random variables $\tau, \epsilon, c >0$ and $0<\beta < \frac{1}{2 \alpha}$ so that $\tau$ is independent of the subordinator $D$ and a.s.\ $|X_\tau - X_t| \geq c (\tau - t)^\beta$ for all $t \in (\tau-\epsilon, \tau)$,
		then a.s.\ $\lambda(t) = X_{E_t}$ does not generate a simple curve.
\end{enumerate}
\end{thm}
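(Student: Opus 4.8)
The plan is to read off simpleness or non-simpleness of the curve driven by $\lambda = X\circ E$ directly from the two threshold results already in hand — Theorem \ref{thm:cis4thm} for the simple case and Theorem \ref{thm:lindrobinsthm} for the non-simple case — once the local regularity of the composition $\lambda(t)=X_{E_t}$ is understood. The entire problem thus reduces to controlling the local behavior of the inverse subordinator $E$, equivalently the small-time behavior of $D$, and this is exactly where the hypothesis that $\psi$ is regularly varying at $\infty$ with index $\alpha$ enters: it pins the ``exponent'' of $E$ at $\alpha$ (up to slowly varying and logarithmic corrections).

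For part (a) I would first show that a.s.\ $E$ is locally H\"older continuous of every order $\gamma<\alpha$ on compact time intervals. Since $E$ increases rapidly exactly where $D$ increases slowly, an upper bound $E_{t+h}-E_t\le Ch^\gamma$ is equivalent to a uniform lower bound on the increments of $D$ over windows of length $\approx h^\gamma$; because $\psi$ is regularly varying of index $\alpha\in(0,1)$, the lower tail $\P(D_s<x)$ is extremely thin for $x\ll s^{1/\alpha}$, so a Borel--Cantelli argument over dyadic scales, together with the many-small-jumps hypothesis $\nu(0,\infty)=\infty$, yields the required uniform lower growth and hence the modulus of continuity of $E$. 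Composing with the $\beta$-H\"older process $X$ shows $\lambda$ is locally H\"older of order $\beta\gamma$; choosing $\gamma<\alpha$ close enough to $\alpha$ that $\beta\gamma>\tfrac12$ (possible precisely because $\beta>\tfrac1{2\alpha}$), $\lambda$ is locally H\"older of order exceeding $\tfrac12$. On a sufficiently short subinterval this forces the Lip$(\tfrac12)$ seminorm below $4$, so after an affine normalization Theorem \ref{thm:cis4thm} makes each piece a quasi-slit; concatenating these simple arcs via the Concatenation Property (and conformal invariance of simple arcs) gives that $\lambda$ generates a simple curve.

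For part (b) I would apply Theorem \ref{thm:lindrobinsthm} at the first time $T=D_{\tau^-}$ at which $E$ reaches the independent level $\tau$, so that $E_T=\tau$ and, by continuity of $E$, $E_t\uparrow\tau$ as $t\uparrow T$. For $t$ just below $T$ the hypothesis on $X$ gives $|X_{E_T}-X_{E_t}|=|X_\tau-X_{E_t}|\ge c(\tau-E_t)^\beta=c(E_T-E_t)^\beta$. Fix $\rho$ with $\alpha<\rho<\tfrac1{2\beta}$ (such $\rho$ exists because $2\alpha\beta<1$); I claim that a.s.\ $E_T-E_t\ge C'(T-t)^{\rho}$ for all $t$ in a left-neighborhood of $T$. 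Granting this, $c(E_T-E_t)^\beta\ge cC'^{\beta}(T-t)^{\rho\beta}$, and since $\rho\beta<\tfrac12$ the factor $(T-t)^{\rho\beta-1/2}$ diverges, so $c(E_T-E_t)^\beta\ge 4\sqrt{T-t}$ for all small $T-t$ regardless of the constants — exactly the hypothesis of Theorem \ref{thm:lindrobinsthm}. To prove the claim, write $w=\tau-E_t$ and use $t\ge D_{(\tau-w)^-}$ to get $T-t\le D_{\tau^-}-D_{(\tau-w)^-}$, so it suffices to bound $D_{\tau^-}-D_{(\tau-w)^-}\le C''w^{1/\rho}$ with $1/\rho<1/\alpha$. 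Since $\tau$ is independent of $D$ with stationary increments, $D_{\tau^-}-D_{(\tau-w)^-}\stackrel{d}{=}D_w$, and the regularly varying tail $\bar\nu(x)\sim x^{-\alpha}\ell(1/x)$ (Tauberian, from $\psi$ regularly varying of index $\alpha$) gives $\P(D_w>C''w^{1/\rho})\lesssim w\,\bar\nu(w^{1/\rho})\approx w^{1-\alpha/\rho}$ up to a slowly varying factor, which is summable over dyadic $w_n$ since $\alpha/\rho<1$; monotonicity of $D$ fills in the intermediate scales.

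The main obstacle is the rigorous derivation of these almost-sure local-growth estimates for $E$ and $D$ under the general regular-variation assumption rather than the self-similar stable case: one must pass from the distributional scaling and tail information supplied by Tauberian theorems to pathwise, all-scales statements (the uniform modulus of continuity of $E$ in (a), and the pathwise increment bound at the independent level $\tau$ in (b)), carrying the slowly varying and logarithmic corrections through the Borel--Cantelli and monotone fill-in steps. A secondary technical point, in (a), is upgrading ``locally H\"older of order $>\tfrac12$'' to a globally simple trace; this is handled by the Concatenation Property but requires checking that the concatenated quasi-slits cannot return to touch earlier portions of the curve.
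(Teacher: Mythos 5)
Your proposal is correct, and its skeleton is the same as the paper's: in (a), H\"older regularity of $E$ of every order below $\alpha$, composition with $X$, subdivision of the time interval into pieces where the Lip$(\frac12)$ norm is below $4$, Theorem \ref{thm:cis4thm}, and the Concatenation Property; in (b), application of Theorem \ref{thm:lindrobinsthm} at $T=D_{\tau-}$, using independence of $\tau$ from $D$, stationarity/time reversal of the subordinator, and an a.s.\ small-time upper bound on increments of $D$ to convert $|X_\tau-X_{E_t}|\ge c(\tau-E_t)^\beta$ into $|\lambda(T)-\lambda(t)|\ge 4\sqrt{T-t}$. Where you genuinely differ is in how the two supporting probabilistic estimates are obtained. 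The paper quotes them: for (a), Bertoin's lower-index criterion (Chapter III, Lemma 17 of \cite{bertoin}) together with the computation $\underline{\mathrm{ind}}(\psi)=\alpha$ (Lemma \ref{lemma:timechangeholder}); for (b), the Fristedt--Sato integral test (Proposition 47.17 of \cite{Sato}) combined with the Tauberian relation $\psi(1/x)\sim\Gamma(1-\alpha)\nu(x,\infty)$ and Proposition 1.5.10 of \cite{regularvariation} (Lemma \ref{lemma:subbehaviorat0}). You instead rederive both facts from scratch by dyadic Borel--Cantelli arguments (a Chernoff lower-tail bound for the uniform modulus of continuity of $E$; a one-big-jump/small-jump decomposition plus Karamata's theorem for $\P\bigl(D_w>C''w^{1/\rho}\bigr)\lesssim w\,\nu(w^{1/\rho},\infty)$), which is more work but self-contained, and both sketches are sound. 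Your exponent bookkeeping in (b) is also slightly cleaner: taking $\alpha<\rho<\frac{1}{2\beta}$ with $\rho\beta<\frac12$ strict makes the comparison with $4\sqrt{T-t}$ hold regardless of constants (no tuning of the constant $m$ as in the paper), and it imposes no constraint $\rho\le 1$; by contrast the paper's choice $\gamma=2\beta\in[1,1/\alpha)$ tacitly requires $\beta\ge\frac12$, so for $\beta<\frac12$ its argument needs the (harmless) preliminary replacement of $\beta$ by $\frac12$, a step your route avoids. Finally, the point you flag at the end of (a) --- that the concatenated quasi-slits form a simple curve, i.e.\ that each $g_{t_i}^{-1}(\hat K_{i+1})$ meets the earlier hull only at its tip --- is passed over in silence in the paper as well; it is standard for quasi-slit halfplanes but deserves an explicit word in a complete write-up.
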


The proof relies on previously known deterministic results 
(Theorems \ref{thm:cis4thm} and \ref{thm:lindrobinsthm}) 
and the following two lemmas which give control on the local behavior of the time change.
The main idea for proving the second statement
is to compare $\lambda$ with an appropriate square root function that is known to generate a non-simple curve.
This is illustrated in Figure \ref{fig:sqrt} in the case when $X_t$ is the deterministic function $\sqrt{1-t}$.

\begin{lemma}\label{lemma:subbehaviorat0}
Let $D$ be a subordinator whose L\'evy measure is infinite and Laplace exponent $\psi$ is regularly varying at $\infty$ with index $\alpha\in(0,1)$.
Then for any $\gamma\in[1,1/\alpha)$, $\lim_{t\downarrow 0}{D_t}/t^\gamma=0$ almost surely. 
\end{lemma}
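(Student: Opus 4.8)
The plan is to reduce the continuous limit $t \downarrow 0$ to a statement along the dyadic sequence $t_n = 2^{-n}$ and then apply Borel--Cantelli. Since $D$ has non-decreasing paths, for $t \in [2^{-(n+1)}, 2^{-n}]$ we have the deterministic bound $D_t / t^\gamma \le D_{2^{-n}} / (2^{-(n+1)})^\gamma = 2^\gamma\, D_{t_n}/t_n^\gamma$, so it suffices to show that $D_{t_n}/t_n^\gamma \to 0$ almost surely. Fixing $\epsilon > 0$, I would aim to show $\sum_n \P(D_{t_n} \ge \epsilon\, t_n^\gamma) < \infty$; Borel--Cantelli then gives $\limsup_n D_{t_n}/t_n^\gamma \le \epsilon$ a.s., and intersecting over $\epsilon = 1/k$ yields the claim.

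The heart of the argument is a suitable upper-tail estimate for $D_t$. Because a subordinator with infinite (e.g.\ stable) L\'evy measure need not have any finite positive exponential moment, one cannot bound the upper tail through $\E[e^{sD_t}]$. Instead I would apply Markov's inequality to the bounded functional $1 - e^{-uD_t}$: for any $u, x > 0$,
\[
\P(D_t \ge x) = \P\bigl(1 - e^{-uD_t} \ge 1 - e^{-ux}\bigr) \le \frac{\E[1-e^{-uD_t}]}{1-e^{-ux}} = \frac{1-e^{-t\psi(u)}}{1-e^{-ux}} \le \frac{t\,\psi(u)}{1-e^{-ux}},
\]
using $1-e^{-t\psi(u)}\le t\psi(u)$. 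Taking $u = 1/x$ makes the denominator the constant $1-e^{-1}$, giving the clean bound $\P(D_t \ge x) \le (1-e^{-1})^{-1}\, t\, \psi(1/x)$. This Markov trick, avoiding exponential moments, is the step I expect to be the main obstacle to get right.

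It then remains to feed in the regular variation of $\psi$ and check summability. Setting $x = \epsilon\, t^\gamma$ with $t = t_n \downarrow 0$, the argument $1/(\epsilon t_n^\gamma)$ tends to $\infty$, so I may invoke the standard upper Potter bound for regularly varying functions (see \cite{regularvariation}): for every $\delta > 0$ there is $C_\delta$ with $\psi(u) \le C_\delta\, u^{\alpha+\delta}$ for all large $u$. This yields
\[
\P(D_{t_n} \ge \epsilon\, t_n^\gamma) \le \frac{C_\delta}{1-e^{-1}}\; \epsilon^{-(\alpha+\delta)}\; t_n^{\,1-\gamma(\alpha+\delta)} = C'\, 2^{-n(1-\gamma(\alpha+\delta))}
\]
for $n$ large. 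Since $\gamma\alpha < 1$, one can choose $\delta$ small enough that $\delta < (1-\gamma\alpha)/\gamma$, making the exponent $1-\gamma(\alpha+\delta)$ strictly positive, so the geometric series converges. Borel--Cantelli and the monotonicity interpolation above then complete the proof. Note that the hypothesis $\gamma \ge 1$ is not actually needed here; only $\gamma < 1/\alpha$ is used.
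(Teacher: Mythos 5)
Your proof is correct, and it takes a genuinely different route from the paper's. The paper's proof is a reduction to heavier cited machinery: it invokes Fristedt's integral test (Proposition 47.17 in \cite{Sato}), which says that $\lim_{t\downarrow 0}D_t/h(t)=0$ a.s.\ provided $h(t)/t$ is positive, continuous, and non-decreasing and $\int_0 \nu[h(t),\infty)\,\mathrm{d}t<\infty$; it then converts the L\'evy-tail condition into a condition on the Laplace exponent via the Tauberian-type relation $\psi(1/x)\sim\Gamma(1-\alpha)\nu(x,\infty)$ as $x\downarrow 0$ (from \cite{bertoin}), and finally checks convergence of $\int^\infty x^{\alpha-1-1/\gamma}\ell(x)\,\mathrm{d}x$ using regular variation. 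Your argument replaces all of this with a self-contained first Borel--Cantelli argument: the dyadic interpolation using monotonicity of $D$, the Markov bound $\P(D_t\ge x)\le (1-e^{-1})^{-1}t\,\psi(1/x)$ applied to the bounded variable $1-e^{-uD_t}$ (which correctly sidesteps the nonexistence of positive exponential moments), and the Potter bound $\psi(u)\le C_\delta u^{\alpha+\delta}$ with $\delta<(1-\gamma\alpha)/\gamma$. Each step checks out. What each approach buys: the paper's proof is shorter on the page and rests on a sharp dichotomy (Fristedt's test is an if-and-only-if criterion), but it genuinely needs $\gamma\ge 1$ so that $h(t)/t=2t^{\gamma-1}$ is non-decreasing, as required by the cited proposition; your proof needs only elementary tools and, as you correctly observe, works for all $\gamma\in(0,1/\alpha)$ --- though this extra generality is cosmetic, since the case $\gamma<1$ already follows from the case $\gamma=1$ by writing $D_t/t^\gamma=(D_t/t)\,t^{1-\gamma}$.
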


\begin{proof}
Fix $\gamma\in[1,1/\alpha)$ and note that the function $h(t)/t$, where $h(t)=2 t^\gamma$, is positive, continuous, and non-decreasing on $(0,\infty)$. By Proposition 47.17 of \cite{Sato} (originally by Fristedt \cite{fristedt}), the desired result follows once we establish $\int_0 \nu[h(t),\infty)\,\textrm{d}t<\infty$. 
However, due to the relation (see the discussion following Chapter III, Proposition 1 in \cite{bertoin}) 
\[
	\psi(1/x)\sim \Gamma(1-\alpha)\nu(x,\infty) \ \ \textrm{as} \ x\downarrow 0,
\]
it suffices to prove that $\int_0 \psi(1/t^\gamma)\textrm{d}t<\infty.$
Upon writing $\psi(u)$ as $\psi(u)=u^\alpha \ell(u)$ using a slowly varying function $\ell$, we obtain 
\[
	\int_0 \psi\Bigl(\frac{1}{t^\gamma}\Bigr)\,\textrm{d}t
	=\dfrac{1}{\gamma}\int^\infty x^{\alpha-1-1/\gamma} \ell(x)\,\textrm{d}x.
\]
Since $\alpha-1-1/\gamma<-1$, the latter integral converges due to Proposition 1.5.10 of \cite{regularvariation}. 
\end{proof}

\begin{figure}
    \centering
    \includegraphics[width=3in]{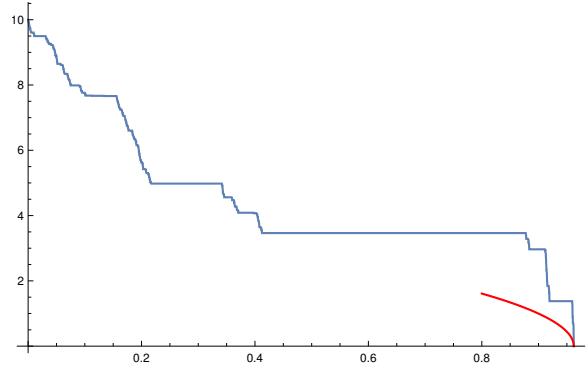}
    \caption{The driving function $\sqrt{1-E_t}$ (blue) is decreasing faster than $4\sqrt{T-t}$ (red) near $T$.}
    \label{fig:sqrt}
\end{figure}

\begin{lemma}\label{lemma:timechangeholder}
Let $D$ be a subordinator whose L\'evy measure is infinite and Laplace exponent $\psi$ is regularly varying at $\infty$ with index $\alpha\in(0,1)$.
Then the inverse $E$ of $D$ is a.s.\ locally weak $\alpha$-H\"older. 
\end{lemma}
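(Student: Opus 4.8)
The plan is to combine a uniform moment bound on the increments of $E$ with the Kolmogorov--Chentsov continuity criterion; I read ``locally weak $\alpha$-H\"older'' as being locally $\gamma$-H\"older for every $\gamma\in(0,\alpha)$, with a constant that may blow up as $\gamma\uparrow\alpha$. The central difficulty is that, as noted in Section \ref{background}, $E$ has neither independent nor stationary increments, so one cannot feed increment statistics of $E$ directly into a continuity theorem. The main step is therefore to recover enough ``stationarity'' to bound $\mathbb{E}[(E_{s+h}-E_s)^p]$ uniformly in $s$.

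First I would establish the key stochastic domination: for every $s,h\ge 0$ and every $p>0$,
\[
\mathbb{E}\big[(E_{s+h}-E_s)^p\big]\le \mathbb{E}\big[E_h^p\big].
\]
This rests on the strong Markov property of $D$ at the first-passage time $E_s=\inf\{r:D_r>s\}$. Writing $V=D_{E_s}-s\ge 0$ for the nonnegative overshoot and $\tilde D_r=D_{E_s+r}-D_{E_s}$ for the post-$E_s$ increment process, which is an independent copy of $D$, one checks that $E_{s+h}-E_s=\tilde E_{(h-V)^+}$, where $\tilde E$ is the inverse of $\tilde D$. Since $\tilde E$ is nondecreasing and $(h-V)^+\le h$, this is at most $\tilde E_h$, which is equal in law to $E_h$; taking $p$-th moments gives the displayed bound. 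This single inequality substitutes for the missing stationarity of the increments.

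Next I would control $\mathbb{E}[E_h^p]$ as $h\downarrow 0$. The potential (renewal) function $U(h)=\mathbb{E}[E_h]$ has Laplace transform $1/\psi$, so by Karamata's Tauberian theorem (see \cite{regularvariation,bertoin}) the regular variation of $\psi$ at $\infty$ with index $\alpha$ forces $U(h)=h^\alpha\,\ell^*(1/h)$ with $\ell^*$ slowly varying. Combined with a standard moment bound of the form $\mathbb{E}[E_h^p]\le C_p\,U(h)^p$ for inverse subordinators, this yields $\mathbb{E}[E_h^p]\le C_p\,h^{\alpha p}\,\ell_p(1/h)$ with $\ell_p$ slowly varying. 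Feeding this into the domination from the previous step gives, on any compact interval, $\mathbb{E}[|E_t-E_s|^p]\le C_p\,|t-s|^{\alpha p}\,\ell_p(1/|t-s|)$.

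Finally I would invoke Kolmogorov--Chentsov. Fixing $\gamma\in(0,\alpha)$ and choosing $p>1/(\alpha-\gamma)$, I absorb the slowly varying factor into an arbitrarily small power of $|t-s|$, so that the bound reads $\mathbb{E}[|E_t-E_s|^p]\le C\,|t-s|^{1+\beta}$ with $\beta=\alpha p-1-o(1)>\gamma p$. The criterion then yields that $E$ is locally H\"older of every order below $\beta/p$, in particular of order $\gamma$; since $E$ is already known to have continuous paths, no passage to a modification is needed. Intersecting over a countable sequence $\gamma\uparrow\alpha$ produces a single almost sure event on which $E$ is locally $\gamma$-H\"older for all $\gamma<\alpha$. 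I expect the main obstacle to be the first step: carefully justifying the increment domination, i.e.\ the overshoot bookkeeping and the application of the strong Markov property of $D$ at the stopping time $E_s$, since everything downstream is a routine combination of Tauberian asymptotics and the continuity criterion.
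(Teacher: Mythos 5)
Your proposal is correct, but it takes a genuinely different route from the paper. The paper's proof is essentially a two-step citation: Chapter III, Lemma 17 of \cite{bertoin} states that an inverse subordinator is a.s.\ locally H\"older of every exponent below the lower index $\underline{\mathrm{ind}}(\psi)$, and the regular variation hypothesis, via the representation $\psi(u)=u^{\alpha}\ell(u)$ and Proposition 1.3.6(v) of \cite{regularvariation}, identifies $\underline{\mathrm{ind}}(\psi)=\alpha$. You instead rebuild the increment control from scratch: the strong Markov/overshoot identity $E_{s+h}-E_s=\widetilde{E}_{(h-V)^{+}}\le \widetilde{E}_h$ is valid (on the event $\{V\ge h\}$ one needs $\widetilde{E}_0=0$ a.s., which is guaranteed by the infinite L\'evy measure, i.e.\ immediate strict increase of $\widetilde{D}$); the Tauberian step is justified by $\int_0^\infty e^{-qh}\,\mathrm{d}U(h)=1/\psi(q)$; the moment bound is standard in the form $\mathbb{E}[E_h^{n}]\le n!\,U(h)^{n}$ for integer $n$, which is all that Kolmogorov--Chentsov requires; and you correctly note both that no passage to a modification is needed (continuous modifications of a continuous process are indistinguishable) and that a countable intersection over $\gamma\uparrow\alpha$ yields the ``every exponent $\gamma<\alpha$'' reading of weak H\"older, which is exactly how the lemma is used downstream in the proof of Theorem \ref{thm:timechangesqrtnotsimple}(a). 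What your argument buys is self-containedness (only textbook inputs: the strong Markov property, Karamata's theorem, Kolmogorov--Chentsov) plus quantitative, uniform-in-$s$ moment estimates for increments of $E$, which are of independent use; in effect you reprove the relevant case of Bertoin's lemma under the regular variation hypothesis. What the paper's route buys is brevity and a purely pathwise statement with no moment machinery; moreover the lower-index formulation would apply even under hypotheses weaker than regular variation, whereas your Tauberian step genuinely uses regular variation of $\psi$ (which the lemma assumes anyway, so nothing is lost here).
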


\begin{proof}
By Chapter III, Lemma 17 in \cite{bertoin}, an inverse subordinator $E_t$ is a.s.\ locally H\"older with any exponent $\rho\in (0,\underline{\textrm{ind}}(\psi))$, where $\underline{\textrm{ind}}(\psi)$ is the \textit{lower index} of $\psi$ defined as
\[
	\underline{\textrm{ind}}(\psi)=\sup \left\{ \rho>0: \lim_{u\to\infty} \frac{\psi(u)}{u^\rho}=\infty \right\}.
\]
Writing $\psi(u)$ as $\psi(u)=u^\alpha \ell(u)$ and applying Proposition 1.3.6(v) of \cite{regularvariation} yields
\[
	\lim_{u\to\infty} \frac{\psi(u)}{u^\rho}
	=\lim_{u\to\infty} u^{\alpha-\rho}\ell(u)
	=\begin{cases}
		0 &\textrm{if} \ \rho>\alpha,\\
		\infty &\textrm{if} \ 0<\rho<\alpha,
	\end{cases}
\]
which implies $\underline{\textrm{ind}}(\psi)=\alpha$, thereby completing the proof.  
\end{proof}

\begin{proof}[Proof of Theorem \ref{thm:timechangesqrtnotsimple}]

To prove (a), assume that $X$ is a.s.\ locally $\beta$-H\"older with $\beta > \frac{1}{2\alpha}$.  
Since $E_t$ is  a.s.~locally weak $\alpha$-H\"older 
due to Lemma \ref{lemma:timechangeholder},
it follows that $\lambda(t) = X_{E_t}$ is  a.s.\ locally weak $\alpha\beta$-H\"older, where $\alpha\beta > 1/2.$
Thus, for almost every $\omega\in\Omega$, we can partition the time interval $[0,T]$ into a finite number of intervals $J_i:=[t_{i-1}, t_i]$, for $i=1, 2, \cdots, n$, so that on $J_i$, $\l$ is Lip(1/2) with $|| \l ||_{1/2} < 4$.
Therefore the hull $\hat{K}_i$ generated by $\l$ restricted to $J_i$ is a simple curve by Theorem \ref{thm:cis4thm}.
The concatenation property allows us to put these pieces together to conclude that 
$$K_T = \hat{K}_1 \cup g^{-1}_{t_1}( \hat{K}_2 ) \cup \cdots \cup g^{-1}_{t_{n-1}}( \hat{K}_{n} )$$
is a simple curve. Since this holds for almost every $\omega\in\Omega$, the desired result follows.

To prove (b),  
assume $\tau, \epsilon, c >0$ and $0<\beta < \frac{1}{2 \alpha}$
so that a.s.\ 
$|X_\tau - X_t| \geq c (\tau - t)^\beta$ for all $t \in (\tau-\epsilon, \tau)$.
Set $T=D_{\tau-}$, where $D_{t-}$ denotes the left limit of the path at $t>0$ and $D_{0-}:=D_0=0$.
In other words, the random time $T$ is the first time that $E_t=\tau.$
Note that given $\tau$, the time-reversed process $(D_{\tau-}-D_{(\tau-t)-})_{t\in[0,\tau]}$ is a subordinator having the same distribution as $(D_t)_{t\in[0,\tau]}$.
Since $D$ and $\tau$ are assumed independent, by Lemma \ref{lemma:subbehaviorat0},
for any fixed $\gamma \in [1, 1/\alpha)$,
\begin{align}
	\P\biggl( \lim_{t\downarrow 0} \frac{D_{\tau-}-D_{(\tau-t)-}}{t^\gamma}=0\biggr)
	&=\E\biggl[\P\biggl( \lim_{t\downarrow 0} \frac{D_{\tau-}-D_{(\tau-t)-}}{t^\gamma}=0\biggm| \tau\biggr)\biggr]\\
	&=\E\biggl[\P\biggl( \lim_{t\downarrow 0} \frac{D_t}{t^\gamma}=0\biggm| \tau\biggr)\biggr]
	=1.
\end{align}
Hence, 
for any $m>0$, almost surely, there exists $\delta \in (0, \epsilon)$ such that 
\[
	D_{\tau-}-D_{(\tau-t)-}\le m t^\gamma\ \ \textrm{for all}\ \ t\in[0,\delta]. 
\]
For this particular path, fix $t\in[D_{\tau-\delta},T)$  and set $r = E_t$, which satisfies $r\in[\tau-\delta,\tau]$ and $t\in[D_{r-},D_r)$.
Then the above condition yields
\[
	T-t\le D_{\tau-}-D_{r-}\le m(\tau-r)^\gamma = m (E_T - E_t)^\gamma. 
\]
Therefore, 
\[
	|\lambda(T)-\lambda(t)| 
	\ge c(E_T-E_t)^\beta
	\ge \dfrac{c}{m^{\beta/\gamma}} (T-t)^{\beta/\gamma} \ \ \textrm{for all} \ \ t\in[D_{\tau-\delta},T).
\]
Now, since $\gamma \in [1, 1/\alpha)$ and $m>0$ are arbitrary, we choose $\gamma$ so that $\beta/\gamma = 1/2$, and we choose $m$ satisfying $c/\sqrt{m}\ge 4$. 
An application of Theorem \ref{thm:lindrobinsthm} implies that almost surely $\lambda(t)$ does not generate a simple curve. 
\end{proof}


We now turn to the Brownian case: 

\begin{thm:timechangebmnotsimple}
Let $E$ be the inverse of a subordinator $D$ whose L\'evy measure is infinite and Laplace exponent $\psi$ is regularly varying at $\infty$ with index $\alpha\in(0,1)$. 
Let $B$ be a Brownian motion independent of $D$. 
Then for any $\kappa>0$, almost surely, the time-changed Brownian motion process $\lambda(t) = \kappa B_{E_t}$ does not generate a simple curve.
\end{thm:timechangebmnotsimple}

In order to apply Theorem \ref{thm:timechangesqrtnotsimple}, we wish to control the growth of the time-changed Brownian motion with an appropriate square root function,
 as illustrated in  Figure \ref{fig:sqrtunderBM}. 
 We obtain this control through the relationship between Brownian motion and 3-dimensional Bessel processes, as given in the following two results.
Recall that we say $Y$ is a $d$-dimensional Bessel process if it satisfies $\mathrm{d}Y_t=\frac{a}{Y_t}\mathrm{d}t+\mathrm{d}B_t$ for $a=\frac{d-1}{2}$. For more details about Bessel processes, see Section 1.10 of \cite{lawler}. 
For a continuous real-valued process $X$ and $0<c<\infty$, define
\begin{equation}
    \tau_c^X=\inf\{t>0:X_t=c\}
    \text{ and }
    \sigma_c^X=\sup\{t>0:X_t=c\}.
\end{equation}

\begin{figure}
    \centering
    \includegraphics[width=3in]{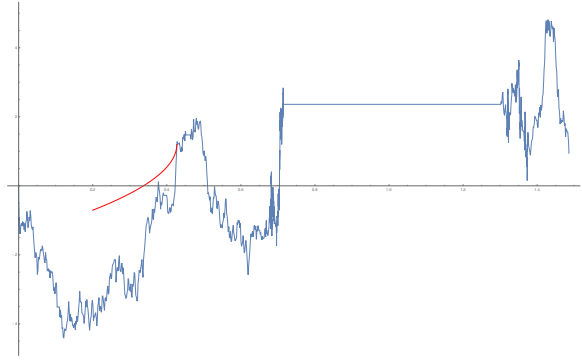}
    \caption{Time-changed Brownian motion sample path (blue) moving faster than $B_{E_T}-4\sqrt{T-t}$ (red) near $T$.}
    \label{fig:sqrtunderBM}
\end{figure}

\begin{prop}[\cite{williams}]\label{prop:williams2}
Let $B$ be a Brownian motion starting at 0, $Y$ a 3-dimensional Bessel process starting at 0, and let $0<c<\infty$. Let $\tau_c=\tau_c^B$. Then the two processes
\begin{equation}
    \{c-B_{\tau_c-t}:0\leq t\leq\tau_c\}
    \text{ and }
    \{Y_t:0\leq t\leq \sigma_c^Y\}
\end{equation}
are identical in distribution.
\end{prop}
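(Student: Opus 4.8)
The plan is to prove this by reducing to a time reversal of \emph{killed} Brownian motion and then matching all finite-dimensional distributions against those of the Bessel process. First I would remove the affine map by setting $\tilde B_s = c - B_s$, a Brownian motion started at $c$; then $\tau_c = \tau_0^{\tilde B}$ is the first time $\tilde B$ hits $0$, and the process in question becomes $\hat B_t = c - B_{\tau_c - t} = \tilde B_{\tau_c - t}$, the path of $\tilde B$ (Brownian motion run from $c$ and absorbed at $0$) read backwards from its absorption time. Since $\tilde B > 0$ on $[0, \tau_c)$, the reversed process $\hat B$ runs from $0$ to $c$ and stays positive, so the claim is exactly that reversing killed Brownian motion from $c$ produces a BES$(3)$ started at $0$ and stopped at its last visit $\sigma_c^Y$ to $c$.

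The concrete tools are: the transition density of Brownian motion killed at $0$, namely $p^0_t(x,y) = p_t(x-y) - p_t(x+y)$ with $p_t(u) = (2\pi t)^{-1/2}e^{-u^2/2t}$, which is symmetric in $x,y$ by the reflection principle; the first-passage density $f_y(t) = \frac{y}{t}p_t(y)$ of hitting $0$ from $y$; the Green's function $G^0(x,y) = \int_0^\infty p^0_s(x,y)\,ds = 2(x\wedge y)$; and the fact that BES$(3)$ is the Doob $h$-transform of killed Brownian motion with the harmonic function $h(x)=x$, so that its transition density is $q_t(x,y)=\frac{y}{x}p^0_t(x,y)$ with entrance law $q_t(0,y)=\lim_{x\downarrow 0}\frac{y}{x}p^0_t(x,y)=\frac{2y^2}{t}p_t(y)$ from $0$.

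Next I would compute the joint law of $\hat B$ at times $0<t_1<\dots<t_n$ on the event $\{t_n < \tau_c\}$ by decomposing the forward path of $\tilde B$ at the epochs $\tau_c - t_i$: the initial forward leg from $c$ to the value $y_n$ of the farthest observation stays positive for a free duration which, after being integrated out, contributes $G^0(c,y_n)=2(c\wedge y_n)$; the interior legs are killed-Brownian transitions $p^0_{t_i-t_{i-1}}(y_{i-1},y_i)$, which are invariant under reversal by symmetry; and the final forward leg is a first passage from $y_1$ to $0$ in time $t_1$, contributing $f_{y_1}(t_1)$. This yields the density $2(c\wedge y_n)\,f_{y_1}(t_1)\prod_{i=2}^n p^0_{t_i - t_{i-1}}(y_{i-1},y_i)$. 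On the Bessel side, the $h$-transform product telescopes, $\prod_{i\ge 2}\frac{y_i}{y_{i-1}}=\frac{y_n}{y_1}$, and since $q_{t_1}(0,y_1)\frac{y_n}{y_1}=2y_n f_{y_1}(t_1)$, the unconstrained BES$(3)$ density is $2y_n f_{y_1}(t_1)\prod_{i\ge 2}p^0_{t_i-t_{i-1}}(y_{i-1},y_i)$. Imposing the last-exit constraint $\{t_n<\sigma_c^Y\}$ means the Bessel path from $y_n$ must return to $c$; using the scale function $s(x)=-1/x$, this event has probability $\frac{c\wedge y_n}{y_n}$, and multiplying reproduces exactly the reversed-Brownian density. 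Matching these sub-probability laws for all $n$ and all times, and noting that both random horizons are path-measurable, identifies the laws of the two stopped processes.

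The hard part will be the time-reversal bookkeeping: justifying the factorization of the reversed path via the Markov/first-passage decomposition, correctly pairing the forward first-passage leg with the reversed start at $0$ and the forward initial leg with the reversed terminal visit to $c$, and --- most delicately --- recognizing that the last-exit constraint on the Bessel side is precisely the hitting-probability factor $(c\wedge y_n)/y_n$ that converts the unconstrained density into the one produced by reversal. A cleaner but less elementary alternative would invoke the general theory of time reversal of Markov processes from co-optional (last-exit) times, under which the reversal of the self-dual killed Brownian semigroup is automatically its $h$-transform; I would use the explicit computation above as the primary route and mention this abstract viewpoint as corroboration.
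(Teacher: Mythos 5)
You should know at the outset that the paper contains no proof of this proposition: it is quoted directly from Williams \cite{williams} and used as a black box in the proof of Theorem \ref{thm:timechangebmnotsimple}, so the only comparison possible is between your argument and the literature. Your proposal is correct, and it is in essence the classical proof of Williams' time-reversal theorem (the argument found in standard references such as Revuz--Yor, Chapter VII, or Rogers--Williams). The quantitative ingredients all check out: the Green's function identity $\int_0^\infty p^0_s(c,y_n)\,\mathrm{d}s=2(c\wedge y_n)$ for the killed kernel $p^0_t(x,y)=p_t(x-y)-p_t(x+y)$; the first-passage density $f_y(t)=\frac{y}{t}p_t(y)$; the entrance law $q_t(0,y)=\frac{2y^2}{t}p_t(y)$ combined with the telescoping $h$-transform factor, which gives the unconstrained Bessel density $2y_n f_{y_1}(t_1)\prod_{i=2}^n p^0_{t_i-t_{i-1}}(y_{i-1},y_i)$; and the last-exit correction $(c\wedge y_n)/y_n$, which is indeed the probability that a BES$(3)$ started at $y_n$ ever hits $c$ (scale function $-1/x$), so that the constrained Bessel density reproduces exactly the reversed-path density $2(c\wedge y_n)f_{y_1}(t_1)\prod_{i=2}^n p^0_{t_i-t_{i-1}}(y_{i-1},y_i)$. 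Two points deserve explicit care in a final write-up, both of which you flagged: (i) the first-passage decomposition should be stated as an identity for the joint density of $(\tau_c,\tilde B_{\tau_c-t_n},\dots,\tilde B_{\tau_c-t_1})$, i.e.\ work with $\mathbb{P}(\tau_c\in\mathrm{d}u,\ \tilde B_{u-t_i}\in\mathrm{d}y_i)$ for $u>t_n$ and integrate over $u$, rather than conditioning on the null event $\{\tau_c=u\}$; and (ii) the final identification needs the (standard, monotone-class) fact that sub-probability finite-dimensional distributions on $\{t_n<\zeta\}$ determine the law of a continuous process killed at a path-measurable time $\zeta$. Compared with the paper's choice to defer entirely to Williams' path-decomposition theory, your route is self-contained and elementary; what it costs is precisely this measure-theoretic bookkeeping, which is routine but should not be waved away.
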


\begin{thm}[\cite{shiga}]\label{thm:shigawatanabe}
Let $\varphi(t)\downarrow0$ when $t\downarrow 0$ and let $Y$ be a $d$-dimensional Bessel process. Then for $d\geq 2$,
\begin{equation}
    \P(Y_t<\varphi(t)\sqrt{t}\text{ i.o. }t\downarrow 0)=1\text{ or }0
\end{equation}
according as
\begin{equation}
    \int_{0}^{\infty}\varphi(t)^{d-2}\frac{\mathrm{d}t}{t}=\infty\text{ or }<\infty\quad (d>2);
\end{equation}
\begin{equation}
    \int_{0}^{\infty}\frac{1}{|\log\varphi(t)|}\frac{\mathrm{d}t}{t}=\infty\text{ or }<\infty\quad (d=2).
\end{equation}
\end{thm}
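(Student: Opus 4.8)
The plan is to prove the integral test as an instance of a \emph{lower-function} (integral) test for the radial process as $t\downarrow 0$, using a $0$--$1$ law to reduce the dichotomy to verifying the two Borel--Cantelli directions, with the exponent $d-2$ (respectively the logarithmic correction when $d=2$) entering through the hitting probability of a small ball by a Bessel process.

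First I would record the two structural facts that drive the whole argument. Since $\{Y_t<\varphi(t)\sqrt t \text{ i.o. } t\downarrow 0\}$ depends only on the germ of the path at $t=0$, it is $\mathcal F_{0+}$-measurable, and Blumenthal's $0$--$1$ law (the Bessel process being strong Markov with $0$ a regular starting point) shows its probability is either $0$ or $1$; thus it suffices to decide which. Second, a Bessel process started at $0$ is self-similar, $Y_{ct}\stackrel{d}{=}\sqrt c\,Y_t$, so I can discretize along a geometric sequence $t_n=\rho^n$ with $\rho\in(0,1)$ and, using the monotonicity $\varphi(t)\downarrow 0$, compare the barrier on each block $[t_{n+1},t_n]$ with the constant level $\varphi(t_n)\sqrt{t_n}$ and rescale the block to the fixed interval $[\rho,1]$. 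On $[\rho,1]$ the rescaled process $\tilde Y_s=Y_{t_ns}/\sqrt{t_n}$ is again a Bessel process whose value at the left endpoint is of order one, and the event ``$Y$ dips below its barrier somewhere in the $n$-th block'' reduces to ``$\tilde Y$ enters the ball of radius $\varphi(t_n)$ on $[\rho,1]$''.

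Define $A_n=\{\exists\,t\in[t_{n+1},t_n]:Y_t<\varphi(t)\sqrt t\}$, so that $\{Y_t<\varphi(t)\sqrt t \text{ i.o.}\}=\{A_n \text{ i.o.}\}$. The key estimate is that, for a transient Bessel process ($d>2$), the probability of entering a ball of radius $a$ starting from unit distance is comparable to $a^{d-2}$ (via the harmonic function $|x|^{2-d}$), while for $d=2$ the corresponding localized hitting probability, i.e.\ of hitting a disk of radius $a$ within a bounded time from unit distance, is comparable to $1/|\log a|$. Applying this with $a=\varphi(t_n)$ gives two-sided bounds $\P(A_n)\asymp \varphi(t_n)^{d-2}$ for $d>2$ and $\P(A_n)\asymp 1/|\log\varphi(t_n)|$ for $d=2$. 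Because $\sum_n \varphi(t_n)^{d-2}$ and $\sum_n 1/|\log\varphi(t_n)|$ are comparable, by monotonicity of $\varphi$ and the constant factor $\log(1/\rho)$ per step, to the respective integrals $\int_0\varphi(t)^{d-2}\,dt/t$ and $\int_0 (1/|\log\varphi(t)|)\,dt/t$, the convergence direction is immediate: when the integral is finite, $\sum_n\P(A_n)<\infty$ and the first Borel--Cantelli lemma gives probability $0$.

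The divergence direction is the main obstacle, since the blocks $A_n$ are not independent and the second Borel--Cantelli lemma does not apply directly. I would handle this with the Markov property: conditioning on $\mathcal F_{t_{n+1}}$ and using the strong Markov property together with a hitting estimate that is \emph{uniform} over starting points of moderate size, restricting with overwhelming probability to the event that $Y_{t_{n+1}}$ lies in a fixed annulus $[\,c_1\sqrt{t_{n+1}},\,c_2\sqrt{t_{n+1}}\,]$, yields a lower bound $\P(A_n\mid \mathcal F_{t_{n+1}})\gtrsim \varphi(t_n)^{d-2}$ (respectively $\gtrsim 1/|\log\varphi(t_n)|$) on a high-probability set. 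Summing these conditional probabilities and invoking the conditional (L\'evy) form of the Borel--Cantelli lemma gives $\P(A_n \text{ i.o.})>0$, and the $0$--$1$ law then upgrades this to probability $1$. The delicate points are to make the one-block hitting bounds two-sided and uniform in the rescaled starting position, and to show that the excursions contributing to different blocks are sufficiently decoupled for the conditional Borel--Cantelli to close, or alternatively to run a Kochen--Stone second-moment estimate controlling $\P(A_j\cap A_k)\lesssim \P(A_j)\P(A_k)$ for $j\ne k$.
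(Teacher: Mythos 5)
The paper does not actually prove this statement: Theorem \ref{thm:shigawatanabe} is quoted verbatim from Shiga and Watanabe \cite{shiga} and used as a black box in the proof of Theorem \ref{thm:timechangebmnotsimple}. So there is no in-paper argument to compare yours against, and your sketch has to be judged on its own terms. Your convergence half is sound: the geometric blocking $t_n=\rho^n$, the comparison of the moving barrier with its endpoint values via the monotonicity of $\varphi$, the exact scale-function computation $\P_r(\text{hit }a)=(a/r)^{d-2}$ for $d>2$ (with the integrability $\E[Y_1^{2-d}]<\infty$ making the randomness of $Y_{t_{n+1}}$ harmless), the finite-horizon logarithmic estimate for the recurrent case $d=2$ (correctly flagged, since the unrestricted hitting probability is $1$ there), and the first Borel--Cantelli lemma all assemble as you describe, with Blumenthal's $0$--$1$ law reducing the dichotomy to a positivity question.

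The genuine gap is in the divergence half, and it is structural rather than technical: L\'evy's conditional Borel--Cantelli lemma, as you invoke it, requires an infinite sequence of events adapted forward along an increasing filtration, with $\sum_n \P(A_n\mid\mathcal F_{n-1})=\infty$ forcing $A_n$ infinitely often. But your blocks $[t_{n+1},t_n]$ accumulate at $t=0$: as $n\to\infty$ the events move \emph{backward} in time, so for any fixed starting index there are only finitely many blocks after any positive time, and no forward-adapted infinite sequence exists to which the lemma applies. The classical repair, which is effectively how tests of this type are established, is time inversion: $(tY_{1/t})_{t>0}$ is again a $d$-dimensional Bessel process, and substituting $s=1/t$ converts the event into $\{\hat{Y}_s<\psi(s)\sqrt{s}\text{ i.o. }s\to\infty\}$ with $\psi(s)=\varphi(1/s)$, while $\int_0\varphi(t)^{d-2}\,\mathrm{d}t/t$ becomes $\int^\infty\psi(s)^{d-2}\,\mathrm{d}s/s$ (and similarly in the logarithmic case). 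At infinity the blocks do form a forward sequence and your conditioning scheme closes, in the manner of the Dvoretzky--Erd\H{o}s/Spitzer tests.

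Your fallback via Kochen--Stone does avoid the direction problem, but the asserted pairwise bound $\P(A_j\cap A_k)\lesssim\P(A_j)\P(A_k)$ is not available as stated. Writing, for $j<k$ (so block $k$ is earlier in time), $\P(A_j\cap A_k)=\E\bigl[\mathbf{1}_{A_k}\,\P(A_j\mid\mathcal F_{t_{j+1}})\bigr]$, the conditional probability is \emph{not} uniformly $\lesssim\varphi(t_j)^{d-2}$: on the event that $Y_{t_{j+1}}$ is atypically small -- which is exactly what conditioning on a nearby earlier dip makes more likely -- the one-block hitting probability is of order one, and if $Y_{t_{j+1}}$ is already below the barrier then $A_j$ holds outright. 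Closing this requires a decoupling device you only gesture at: for instance a buffer sub-block during which the process regains height of order $\sqrt{t}$ with probability close to one (using stochastic monotonicity of the Bessel process in its starting point, or a uniform transition-density comparison), applied for $k-j$ larger than a fixed constant, with the finitely many near-diagonal pairs absorbed into the constant of the Kochen--Stone ratio. So the architecture of your sketch is the right one, but the divergence direction as written does not go through: either the time-inversion reduction or the quantified decoupling estimate must be supplied, and that is precisely where the substance of the Shiga--Watanabe result lies.
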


In our proof of Theorem \ref{thm:timechangebmnotsimple}, 
we will use $\varphi(t)=(\log\frac{1}{t})^{-\eta}$ for $\eta>0$ and $d=3$. Then $\varphi(t)\downarrow 0$ when $t\downarrow 0$. Also, $\int_{0}^{\infty}\varphi(t)\frac{\mathrm{d}t}{t}=\infty$ for $\eta\leq 1$ and $\int_{0}^{\infty}\varphi(t)\frac{\mathrm{d}t}{t}<\infty$ for $\eta>1$.

\begin{proof}[Proof of Theorem \ref{thm:timechangebmnotsimple}]

Let $0<c<\infty$ and $\tau_c=\tau_c^B$. Due to Proposition \ref{prop:williams2}, $(c-B_{\tau_c-t})_{0\leq t\leq\tau_c}$ has the same distribution as $(Y_t)_{0\leq t\leq\sigma_c}$ for a 3-dimensional Bessel process $Y$ where $\sigma_c=\sigma_c^Y$. By Theorem \ref{thm:shigawatanabe} and the discussion before this theorem, if $\eta>1$
\begin{equation}
    \P\left(Y_t<\frac{\sqrt{t}}{(\log\frac{1}{t})^\eta}\text{ i.o. }t\downarrow 0\right)=0.
\end{equation}
As such, for $a \in (\frac{1}{2}, \frac{1}{2\alpha})$ and some $h>0$, almost surely there exists $\epsilon>0$ so that $Y_t\geq ht^a$ for $t\in[0,\epsilon]$. 
Therefore, almost surely there exists $\epsilon>0$ with $\kappa B_{\tau_c}- \kappa B_t\geq \kappa h(\tau_c-t)^a$ for $t\in[\tau_c-\epsilon,\tau_c]$.  
Since $B_t$ is assumed independent of $D_t$, the random time $\tau_c=\tau_c^B$ is also independent of $D_t$.
 An application of  Theorem \ref{thm:timechangesqrtnotsimple}(b) completes the proof. 
\end{proof}

\subsection{Examples of Time-Changed Deterministic Functions}
\label{deterministicexample}

In this section we consider two deterministic functions $\phi$ whose Loewner hulls have previously been analyzed.  
To highlight the effect of the time change on driving functions, we look at the behavior of the hulls driven by $\lambda(t)=\phi(E_t)$, where $E$ is an inverse $\alpha$-stable subordinator.

\smallskip

\noindent {\bf Example 1}: The Loewner hulls driven by $\phi(t) = c \sqrt{t}$
 are line-segments starting from 0, with an angle determined by the constant $c$. See \cite{knk}.
 In particular, the hulls are always simple curves. 
With the driving function $\lambda(t) = c \sqrt{E_t}$,
we see different hull behavior in two regimes, when $\alpha >1/2$ and when $\alpha < 1/2$.  
See Figure \ref{fig:sqrtEt}. 

\begin{figure}
    \centering
    \includegraphics[width=3in]{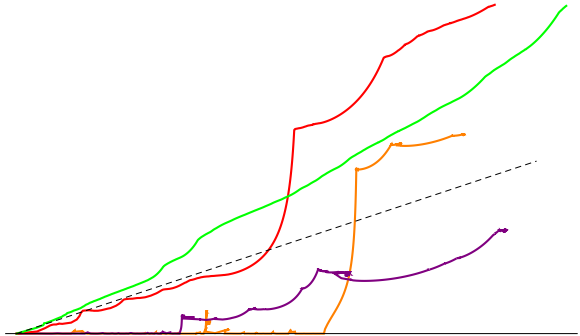}
    \caption{Sample hulls generated by  $\sqrt{E_t}$, where $E_t$ is an inverse $\alpha$-stable subordinator ($\alpha = 0.9 $ in green, $\alpha = 0.7$ in red, $\alpha = 0.4$ in purple and $\alpha = 0.3$ in orange). The black dashed line represents a hull generated by $\sqrt{t}$ (without a time change).}
    \label{fig:sqrtEt}
\end{figure}

Suppose first that $\alpha > 1/2$.  
Let $K_t$ be the hull generated by $\lambda$, let $\epsilon>0$, and let $\tau_\e$ be the first time that $E_t = \e$.
We first consider $\lambda$ restricted to the interval $[\tau_\e,\infty)$ 
which generates the hull $g_{\tau_\e}(K_t \setminus K_{\tau_\e})$.
Since $\phi$ is $C^1$ on $[\e, \infty)$, Theorem \ref{thm:timechangesqrtnotsimple}(a)
implies that  $g_{\tau_\e}(K_t \setminus K_{\tau_\e})$ is a simple curve.  
Thus  $K_t \setminus K_{\tau_\e}$ must be a simple curve for $t> \tau_\e$,
and so we can define $\gamma(t)$, for $t \in (0, \infty)$,  so that $K_b \setminus K_a = \gamma[a, b]$.
Further, using the continuity of $\l$ we can argue that $\gamma(t) \to 0$ as $t \to 0^+$.  
Thus almost surely $\gamma$ is a simple curve defined on $[0, \infty)$, and $K_t=\g[0,t]$.
Proposition \ref{tangentialdeparture} and Lemma \ref{lemma:subbehaviorat0} show that $\gamma$ leaves the real line tangentially.

Suppose next that $\alpha < 1/2$.  
Arguing as above shows that the hull must still leave the real line tangentially.  
However the hull is no longer a simple curve.
In particular, we can apply Theorem \ref{thm:timechangesqrtnotsimple}(b) with $\beta = 1$ for any fixed time $\tau > 0$.
This shows that the hulls generated by $\lambda$ are far from being simple curves, as they are non-simple at the first time that $E_t$ reaches height $\tau.$  Since this is true for all $\tau >0$ and since $E_t$ is almost surely not constant in a neighborhood of 0 (otherwise $D_0$ would be positive, which contradicts the assumption $D_0=0$), we find that almost surely $K_t$ is non-simple for all $t>0$.

\smallskip

\noindent {\bf Example 2}: For our last example, we consider the case when $\lambda(t) = c\, W(E_t)$, 
where $  W(t) = \sum_{n=0}^\infty 2^{-n/2} \cos(2^n t) $ is the Weierstrass function.  
Since this function is continuous but nowhere differentiable, it serves as a deterministic analogue of Brownian motion.
Similar to SLE,  the Loewner hulls driven by $c\,W$, studied in \cite{lindrobins}, have a phase transition.  
In particular for $c$ small enough $c\,W$ generates simple curves, but for $c$ large enough the hulls are not simple.
In \cite{lindrobins}, it is also shown that near a local maximum $W_t$ grows faster than an appropriately scaled square root curve.  Therefore, Theorem \ref{thm:timechangesqrtnotsimple}(b) implies that $c\, W(E_t)$ does not generate a simple curve.  Since these local maximums are dense, we can further conclude that almost surely the Loewner hull $K_t$ is non-simple for all $t>0$.
See Figure \ref{fig:WEt}.

\begin{figure}
    \centering
    \includegraphics[width=3in]{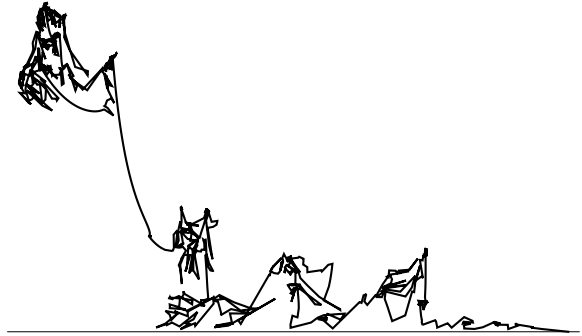}
    \caption{Sample hull generated by  $W(E_t)$ where $E_t$ is an inverse $0.7$-stable subordinator. }
    \label{fig:WEt}
\end{figure}

%
%
\section*{Acknowledgments} 
The authors thank Krzysztof Burdzy for pointing out some useful references. Part of this research was conducted while Kei Kobayashi and Andrew Starnes were affiliated with the Department of Mathematics of the University of Tennessee. The authors thank the University of Tennessee for their support.

\end{document}